\documentclass{article}

\usepackage{amssymb}
\usepackage{amsmath}
\usepackage{bbm}

\usepackage{amsthm}
\newtheorem{lemma}{Lemma}

\newtheorem{proposition}{Proposition}

\theoremstyle{definition}
\newtheorem{example}{Example}

\usepackage[noblocks]{authblk}

\usepackage{graphicx}
\usepackage[captionskip=3mm]{subfig}
\usepackage{epstopdf}
\usepackage{float}
\graphicspath{{figures/}}

\usepackage{booktabs}
\heavyrulewidth=.1em 
\lightrulewidth=.05em 

\usepackage{microtype}

\usepackage{placeins}

\usepackage[hyperfootnotes=false]{hyperref}

\setlength{\intextsep}{8mm plus0mm minus0mm}

\setlength{\parindent}{0.0in}
\setlength{\parskip}{0.1in}

\setcounter{secnumdepth}{5}
\setcounter{tocdepth}{4}

\relpenalty=9999
\binoppenalty=9999


\DeclareMathOperator{\E}{\mathcal{E}}

\DeclareMathOperator{\supp}{\operatorname{supp}}
\renewcommand{\H}{\mathcal{H}}
\renewcommand{\div}{\operatorname{div}}
\newcommand{\norm}[1]{\left\| #1 \right\|}

\numberwithin{equation}{section}
\setlength{\parskip}{0.1cm}

\begin{document}

\title{Quantitative photoacoustic tomography with piecewise constant material parameters}

\date{\today}

\author[1]{W.~Naetar ({wolf.naetar@univie.ac.at})}

\author[1,2]{O.~Scherzer ({otmar.scherzer@univie.ac.at})}

\affil[1]{\footnotesize Computational Science Center, University of Vienna \authorcr Oskar Morgenstern-Platz 1, A-1090 Vienna, Austria \vspace{.5\baselineskip}}
\affil[2]{\footnotesize Radon Institute of Computational and Applied Mathematics, Austrian Academy of Sciences \authorcr Altenbergerstr. 69, A-4040 Linz, Austria \vspace{.5\baselineskip}}

\maketitle

\section*{Abstract}
The goal of \emph{quantitative} photoacoustic tomography is to determine optical and acoustical material properties from initial pressure maps as obtained, 
for instance, from photoacoustic imaging. The most relevant parameters are absorption, diffusion and Gr\"{u}neisen coefficients, all of which can be heterogeneous. 
Recent work by Bal and Ren shows that in general, unique reconstruction of all three parameters is impossible, even if multiple measurements of the initial pressure (corresponding to different laser excitation directions at a single wavelength) are available. 

Here, we propose a restriction to piecewise constant material parameters. We show that in the diffusion approximation of light transfer, piecewise constant absorption, diffusion \emph{and} 
Gr\"{u}neisen coefficients can be recovered uniquely from photoacoustic measurements at a single wavelength. In addition, we implemented our ideas numerically and tested them on simulated three-dimensional data.

\paragraph*{Keywords. Quantitative photoacoustic tomography, mathematical imaging, inverse problems} 

\paragraph*{AMS subject classifications.}  35R25, 35R30, 65J22, 92C55

\section{Introduction}
\label{sec:intro}
Photoacoustic tomography (PAT) is a hybrid imaging technique utilizing the coupling of laser excitations with ultrasound measurements. Tissue irradiated by a short monochromatic laser pulse generates an ultrasound signal (due to thermal expansion) which can be measured by ultrasound transducers outside the medium. From these measurements, the ultrasound wave's initial pressure (whose spatial variation depends on material properties of the tissue) can be reconstructed uniquely by solving a well-studied inverse problem for the wave equation. For further information on this inverse problem, see, e.g., Kuchment and Kunyansky \cite{KucKun08}.

The obtained ultrasound initial pressure \emph{qualitatively} resembles the structure of the tissue (i.e., its inhomogeneities are visible). It is, however, desirable to image material parameters (whose values can serve as diagnostic information) instead. That is the goal of \emph{quantitative} photoacoustic tomography (qPAT).

Mathematically, the problem can be posed as follows. In biological tissue, where photon scattering is a dominant effect compared to absorption, light transfer can be described by the \emph{diffusion approximation} of the \emph{radiative transfer equation}. It is valid in regions $\Omega \subset \mathbb{R}^3$ with sufficient distance to the light source and is given by
\begin{equation}
\label{eq:diff_eq}
	-\div(D(x) \nabla u(x)) + \mu(x) u(x) = 0. 
\end{equation}

$u(x)$ denotes the \emph{fluence} (that is, the laser energy per unit area at a point $x$), $\mu(x)$ the \emph{absorption coefficient} (the photon absorption probability per unit length) and $D(x)=\frac{1}{3(\mu + \mu_s')}$ (where $\mu_s'(x)$ denotes the \emph{reduced scattering coefficient}) the \emph{diffusion coefficient}. Both $\mu$ and $D$ vary spatially and depend on the wavelength of the laser excitation. For details and a derivation of the diffusion approximation, we refer to \cite{Arr99,WanWu07}. 

In the literature, \eqref{eq:diff_eq} is commonly augmented with Dirichlet boundary conditions (which, in practice, might not be known) or, at interfaces with non-scattering media, Robin-type boundary conditions (see, for instance, \cite{WanWu07}).

In this model, the absorbed laser energy $\E(x)$ is given by 
\begin{equation}
\label{eq:def_E}
	\E(x)=\mu(x) u(x).
\end{equation}
The ultrasound initial pressure $\Gamma$ obtained by photoacoustic imaging is proportional to the absorbed energy $\E$, so we have
\begin{equation}
\label{eq:def_H}
	\H(x)=\Gamma(x) \E(x)=\Gamma(x) \mu(x) u(x). 
\end{equation}
The (spatially varying) dimensionless constant $\Gamma$ is called the Gr\"{u}neisen parameter, its value corresponds to the conversion efficiency from change in thermal energy to pressure. 

Hence, the goal in qPAT is to find the parameters $\mu,D,\Gamma$ in a domain $\Omega$ given 
\begin{equation*}
	\H^k = \Gamma \mu u^k, \quad k=1,\ldots,K
\end{equation*} 
where $u^k$ solves \eqref{eq:diff_eq} in $\Omega$ (here and in the following, the index $k$ corresponds to varying laser excitation directions). 

Previous work on this problem (and variations of it) can be found, e.g., in \cite{AmmBosJugKan11,BalRen11a,BalRen12,BalUhl10,BanBagVasRoy08,CoxArrBea09,CoxArrKoeBea06,GaoOshZha12,LauCoxZhaBea10,RenGaoZhao13,SarTarCoxArr13,ShaCoxZem11,TarCoxKaiArr12,YuaZahJia06,Zem10}. For a more comprehensive list, we refer to the review article \cite{CoxLauArrBea12} by Cox et al. 

In particular, Bal and Ren showed (see \cite{BalRen11a}) that unique reconstruction of all three parameters $\mu,D,\Gamma$ is impossible, independent of the number of measurements $\H^k$. They suggested to overcome this problem by the use of \emph{multi-spectral data} (i.e., multiple photoacoustic measurements generated by laser excitations at different wavelengths). Using these data, unique reconstruction of all three material parameters (at the respective wavelengths used), becomes possible \cite{BalRen12}. 

In our paper, we take a different approach and propose a restriction to piecewise constant $\mu,D,\Gamma$. Similar restrictions (due to the large number of publications which use this approach we only provide a small selection of references) have been proposed for \emph{Diffusion Optical Tomography} (e.g., \cite{ArrDorKaiKolSchTarVauZac06,Har09,KolVauKai00,ZacScwKolArr09}) and \emph{Conductivity Imaging} (e.g., \cite{BerFra11,Dru98,KimKwoSeoYoo02,RonSan01}). 

For our problem, it turns out that the reconstruction problem becomes a lot simpler and admits a unique solution for all three parameters $\mu,D,\Gamma$. 

The result is based on an analytical, explicit reconstruction procedure consisting of two steps. First, we recover the regions where $\mu,D,\Gamma$ are constant by finding the discontinuities of photoacoustic data $\H$ and its derivatives up to second order (see Proposition \ref{prop:jump_detection}). In the second step, we determine the actual values of $\mu,D,\Gamma$ from the jumps of $\H$ and $\nabla \H \cdot \nu$ (the normal derivatives) across the obtained region boundaries (cf. Proposition \ref{prop:uniqueness}). Our result holds under certain conditions on the parameters $\mu,D,\Gamma$ and the direction of $\nabla u$. We emphasize that we don't necessarily require that $u|_{\partial\Omega}$ is known (which may not be the case in practice) or that specific boundary conditions hold on $\partial\Omega$. Instead, we use reference values of the parameters for reconstruction, i.e., values of one of the pairs $(\mu(x),\Gamma(x))$ or $(D(x),\Gamma(x))$ at a single point $x \in \Omega$. 

Numerically, the reconstruction method we present heavily relies on an efficient \emph{jump detection} algorithm (using a computational edge detection method) \emph and subsequent \emph{3D-image segmentation}, which provides a connection with image analysis. 

The paper is organized as follows. In section \ref{sec:illposed}, we recap some of the non-uniqueness results for the qPAT problem in literature. In section \ref{sec:uniqueness}, we prove unique solvability for piecewise constant $\mu,D,\Gamma$. In section \ref{sec:numerics}, we give an example of how our ideas can be applied numerically. The last section contains two concrete numerical examples where the reconstruction method is applied to simulated data (with one data set FEM-generated and one data set generated by Monte Carlo simulations). The paper ends with a conclusion.

\section{Ill-posedness of qPAT with smooth parameters}
\label{sec:illposed}

In this section, we review some of the non-uniqueness results for quantitative photoacoustic tomography. For simplicity of presentation, we augment (in this section only) equation \eqref{eq:diff_eq} with Dirichlet boundary conditions, so we have 

\begin{equation}
\label{eq:diff_eq_dirichlet}
 \begin{aligned}
	-\div(D(x) \nabla u(x)) + \mu(x) u(x) &= 0 \quad \text{in $\Omega \subset \mathbb{R}^3$}\\
	u(x)|_{\partial \Omega} &= f(x).
 \end{aligned}	
\end{equation}

The boundary values represent the laser illumination of one particular experiment. In this section, we assume $f$ is known, satisfies $f>0$ and is sufficiently smooth. 

It is well-known and has been shown numerically (see \cite{CoxArrBea09,ShaCoxZem11}) that even when the Gr\"{u}neisen coefficient $\Gamma$ is known (so the absorbed energy $\E=\mu u$ can be calculated from $\H$), different pairs of diffusion and absorption coefficients may lead to the same absorbed energy map $\E$. To see this analytically, for given smooth coefficients $D,\mu >0$ let $u(D,\mu)$ be the corresponding smooth solution of \eqref{eq:diff_eq_dirichlet} and $\E(\mu,D)=\mu u(D,\mu)$ the absorbed energy. By the strong maximum principle (see \cite[Theorem 3.5]{GilTru01}), $u(D,\mu) > 0$ in $\Omega$ (since $f >0$). 

Moreover, for fixed $\E=\E(\mu,D)$, let us denote by $v(\tilde D)$ the solution of 
\begin{equation}
\label{eq:diff_eq_modified}
 \begin{aligned}
	\div(\tilde D(x) \nabla v(x)) &= \E(x) \quad \text{in $\Omega$} \\
	v(x)|_{\partial \Omega} &= f(x).
 \end{aligned}
\end{equation}
Note that $v(D)=u(D,\mu) > 0$. Then, for every $\tilde{D}$ with $\norm{D-\tilde{D}}_{1,\infty} < \epsilon$ (with $\epsilon$ small enough), we also have $v(\tilde{D}) > 0$. 
To see this, note that 
\begin{align*}
\div(\tilde{D}\nabla(v(\tilde{D}) - v(D)))&=-\div((\tilde{D}-D)\nabla v(D)) \quad \text{in $\Omega$} \\
(v(\tilde{D}) - v(D))|_{\partial\Omega} &= 0
\end{align*}
Using a priori bounds \cite[Theorem 3.5]{GilTru01}, 
\begin{equation*}
\norm{v(\tilde{D})-v(D)}_{\infty} \leq C_1 \norm{\div((\tilde{D}-D)\nabla v(D))}_\infty \leq C_2 \norm{\tilde{D} - D}_{1,\infty}, 
\end{equation*}
which implies $v(\tilde D) >0$ if $\epsilon$ is sufficiently small. 

Now, taking $\tilde{\mu}=\frac{\E(\mu,D)}{v(\tilde{D})}$, we get 
\begin{equation*}
	\div(\tilde D \nabla v(\tilde D)) - \tilde \mu v(\tilde D) = 0.
\end{equation*}
Hence, $v(\tilde{D})=u(\tilde{D},\tilde{\mu})$ and $\E(\tilde \mu,\tilde D)= \tilde{\mu} u(\tilde{D},\tilde{\mu}) = \E(\mu,D)$, which shows that infinitely many pairs of 
coefficients may create the same absorbed energy map.

This nonuniqueness can be overcome by varying $f$ (i.e., changing the illumination pattern), obtaining multiple absorbed energy maps. This approach is called \emph{multi-source} quantitative photoacoustic tomography. Bal and Ren \cite{BalRen11a} showed that while this additional information leads to unique reconstruction of $\mu,D$ from $\E$, finding three unknown parameters $\mu,D,\Gamma$ given $\H=\Gamma \mu u$ is still impossible, independent of the number of illuminations (any more than two do not add any information). In fact, they showed that for \emph{any} given Lipschitz continuous $\mu$, $D$ or $\Gamma$ the other two parameters can be chosen such that given initial pressures $\H^k=\Gamma \mu u^k$ (for multiple illumination patterns $f^k$) are generated.

\begin{example}
\label{ex:nonuniqueness}
Given any set of parameters $(\mu,D,\Gamma)$, for every $\lambda > 0$, $(\lambda \mu,\lambda D,\frac{1}{\lambda} \Gamma)$ generate the same measurements, since  \eqref{eq:diff_eq_dirichlet} is invariant under simultaneous scaling of $\mu$ and $D$. 
\end{example}

This simple example shows that even for constant parameters knowledge of $f$ and $\H$ is insufficient to determine $\mu,D,\Gamma$. Hence, more prior information about the unknown parameters will be necessary in order to get a unique solution. 

\section{Reconstruction of piecewise constant parameters}
\label{sec:uniqueness}

To overcome this essential non-uniqueness, we assume that $\mu,D,\Gamma$ are piecewise constants. That is, for some partition $(\Omega_m)_{m=1}^M$ of $\Omega \subset \mathbb{R}^3$,  

\begin{equation}
\label{eq:piecewise_const}
	\overline\Omega = \bigcup_{m=1}^M \overline\Omega_m, \enskip \mu = \sum_{m=1}^M \mu_m 1_{\Omega_m}, \enskip D = \sum_{m=1}^M D_m 1_{\Omega_m}, \enskip \Gamma = \sum_{m=1}^M \Gamma_m 1_{\Omega_m}.
\end{equation}

Since the parameters are discontinuous, we need a generalized solution concept. Under certain additional conditions (which we explain in detail in Appendix \ref{sec:transmission_cond}) a weak solution $u$ of \eqref{eq:diff_eq_dirichlet} with piecewise constant parameters $\mu, D$ can be characterized by 

\begin{equation}
\label{eq:u_continuity}
	u \in C^\alpha(\overline{\Omega})
\end{equation}

for some $\alpha >0$ and, for $m=1,\ldots,M$,

\begin{equation}
\label{eq:diff_eq_scalar}    
\begin{aligned}
	&u_m := u|_{\Omega_m} \in C^\infty(\Omega_m) \\
	&D_m \Delta u_m - \mu_m u_m = 0 \quad \text{in } \Omega_m
\end{aligned}
\end{equation}

and, almost everywhere on interfaces $I_{mn}:=\partial\Omega_m \cap \partial\Omega_n$, 

\begin{equation}
\label{eq:transmission_cond}
	D_m \nabla u_m \cdot \nu = D_n \nabla u_n \cdot \nu \quad  \text{(for any normal vector $\nu$).} 
\end{equation}

The transmission condition \eqref{eq:transmission_cond} is ill-defined on corners and intersections of multiple subregions, therefore we can only expect it to hold almost everywhere.  For details and a derivation, see Appendix \ref{sec:transmission_cond}. The transmission condition \eqref{eq:transmission_cond} can also be derived physically (rather than starting from a weak solution), it is accurate within the scope of the diffusion approximation  \cite{Aro95,RipNie99}.

From now on, we consider $u_m$ and $\H_m:=\H|_{\Omega_m}=\Gamma_m \mu_m u_m$ (and their derivatives up to second order) continuously extended (from the inside) to $\partial\Omega_m$. We emphasize 
that for $\nabla u_m$ and $\nabla \H_m$, this may only be possible for almost all points (with respect to the surface measure), see Appendix \ref{sec:transmission_cond}. 

We also assume that $u$ is strictly positive and bounded from above in $\overline\Omega$. 

In the following Proposition \ref{prop:jump_detection}, we show that the jump set $\bigcup_m \partial\Omega_m$ of piecewise constant parameters $\mu,D,\Gamma$ can be determined from  photoacoustic initial pressure data $\H=\Gamma \mu u$. For $k \geq 0$, denote by 
\begin{equation*}
J_k(f)=\Omega \setminus \bigcup \{ B \subset \Omega \big| B \text{ is open and } f \in C^k(B) \}
\end{equation*}
the set of discontinuities of a function $f \in L^\infty(\Omega)$ and its derivatives up to $k$-th order.

We require an assumption on $\nabla u$ and the unknown parameters $\mu,D,\Gamma$. For all $x \in J_0(D) \setminus (J_0(\Gamma \mu) \cup J_0(\frac{\mu}{D})) \subset \partial\Omega_m \cap \partial\Omega_n$  (i.e., interfaces of $D$ which are not interfaces of $\Gamma \mu$ and $\frac{\mu}{D}$) we require that the fluence $u$ satisfies almost everywhere (where $\nu$ denotes a normal vector on $\partial\Omega_m \cap \partial\Omega_n$),
\begin{equation}
\label{eq:cd_assumption_1}
	|\nabla u_n(x) \cdot \nu(x)| >  0 \quad (\stackrel{\eqref{eq:transmission_cond}}{\iff} |\nabla u_m(x) \cdot \nu(x)| > 0).
\end{equation}

\begin{proposition}
\label{prop:jump_detection}

Let $\mu,D,\Gamma$ be of the form \eqref{eq:piecewise_const} and $u=\sum_m u_m 1_{\Omega_m}$ and $\H=\sum_m \H_m 1_{\Omega_m}$ 
the corresponding fluence and initial pressure distributions satisfying condition \eqref{eq:cd_assumption_1} in $\Omega$. Then, 

\begin{equation*}
	\overline{J_0(\mu)} \cup \overline{J_0(D)} \cup \overline{J_0(\Gamma)} = \overline{J_2(\H)}.
\end{equation*}

\end{proposition}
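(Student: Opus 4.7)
Both sides of the claimed identity are closed subsets of $\Omega$, since each set $J_k(f)$ is the complement of an open set by definition. The closures on the left are therefore automatic, and it suffices to prove the two inclusions $J_0(\mu)\cup J_0(D)\cup J_0(\Gamma)\subseteq J_2(\H)$ and $J_2(\H)\subseteq J_0(\mu)\cup J_0(D)\cup J_0(\Gamma)$. I would attack each by a contrapositive argument on a small open ball.

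The easy inclusion is the second one. If $x$ lies outside all three jump sets on the right, some ball $B\ni x$ also does, so $\mu,D,\Gamma$ are continuous on $B$. Since they are piecewise constant, take only finitely many values, and $B$ is connected, each of them is actually constant on $B$. Then \eqref{eq:diff_eq_scalar} reduces on $B$ to a linear elliptic equation with constant coefficients, so interior regularity gives $u\in C^\infty(B)$ and hence $\H=\Gamma\mu u\in C^2(B)$, i.e.\ $x\notin J_2(\H)$.

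The main work is the other inclusion. Suppose $\H\in C^2(B)$ for some ball $B$; I want to show that every interface $I_{mn}=\partial\Omega_m\cap\partial\Omega_n$ meeting $B$ carries equal parameters on both sides. Pick a point $y\in I_{mn}\cap B$ that is not on a corner of the partition and at which \eqref{eq:transmission_cond}, \eqref{eq:cd_assumption_1} and the continuous extensions of $u_m,u_n,\nabla u_m,\nabla u_n$ all hold; the set of such points has full surface measure on $I_{mn}\cap B$. Three continuity statements for $\H$ across $I_{mn}$ now yield three scalar relations. First, $\H$ continuous with $u_m(y)=u_n(y)>0$ forces $\Gamma_m\mu_m=\Gamma_n\mu_n$. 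Second, from \eqref{eq:diff_eq_scalar} we have $\Delta\H_m=(\Gamma_m\mu_m^2/D_m)\,u_m$ on $\Omega_m$, so continuity of $\Delta\H$ at $y$ gives $\mu_m/D_m=\mu_n/D_n$. Third, continuity of $\nabla\H\cdot\nu$ combined with the first relation gives $\nabla u_m\cdot\nu=\nabla u_n\cdot\nu$; together with \eqref{eq:transmission_cond} this leaves the dichotomy $D_m=D_n$ or $\nabla u\cdot\nu=0$ at $y$. The second alternative is incompatible with $D_m\neq D_n$: if $D_m\neq D_n$, then $y\in J_0(D)$, whereas $\Gamma_m\mu_m=\Gamma_n\mu_n$ and $\mu_m/D_m=\mu_n/D_n$ imply $y\notin J_0(\Gamma\mu)\cup J_0(\mu/D)$, so \eqref{eq:cd_assumption_1} forces $\nabla u\cdot\nu\neq 0$ at $y$. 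Hence $D_m=D_n$, and then $\mu_m=\mu_n$ and $\Gamma_m=\Gamma_n$ follow from the other two relations. Applied to every interface meeting $B$, this gives $B\cap(J_0(\mu)\cup J_0(D)\cup J_0(\Gamma))=\emptyset$.

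The hard part is clearly the case analysis in the last step: one has to extract three scalar equalities (in $\Gamma\mu$, $\mu/D$, and either $D$ or a normal derivative) out of only the $C^2$-continuity of $\H$, the PDE and the transmission condition, and ultimately rule out the parasitic possibility that a $D$-interface is hidden by $\nabla u\cdot\nu$ vanishing along it. That last exclusion is exactly the content of the non-degeneracy assumption \eqref{eq:cd_assumption_1}.
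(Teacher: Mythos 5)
Your proof is correct and rests on exactly the same three observations as the paper's — continuity of $u$ forces a jump of $\H$ when $\Gamma\mu$ jumps, the equation $\Delta\H_m/\H_m=\mu_m/D_m$ detects jumps of $\mu/D$, and the transmission condition together with \eqref{eq:cd_assumption_1} detects jumps of $D$ via $\nabla\H\cdot\nu$ — so it is essentially the paper's case analysis run in the contrapositive direction. Your ordering (establishing $\Gamma_m\mu_m=\Gamma_n\mu_n$ and $\mu_m/D_m=\mu_n/D_n$ before invoking \eqref{eq:cd_assumption_1}) makes the applicability of that hypothesis slightly more explicit than the paper's case (3), but the argument is the same.
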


\begin{proof}

Let $B \subset \Omega$ be an open ball with $B \cap (J_0(\mu) \cup J_0(D) \cup J_0(\Gamma) ) = \emptyset$. Since $u$ solves an elliptic PDE with constant coefficients in $B$, we have $u \in C^\infty(B)$ by interior regularity. Hence $\H \in C^\infty(B)$ (since $\Gamma\mu$ is constant in $B$), which implies $J_2(\H) \subset J_0(\mu) \cup J_0(D) \cup J_0(\Gamma)$. 

To show the converse, take $x \in \Omega$ such that $x \in J_0(\mu) \cup J_0(D) \cup J_0(\Gamma)$ (that is, one of the parameters jumps at $x$). We have to show that $x \in \overline{J_2(\H)}$. 

Let $m,n$ be such that $x \in I_{mn}=\partial \Omega_m \cap \partial \Omega_n$. We distinguish three cases:

\begin{enumerate}

	\item[(1)] $\Gamma_m \mu_m \neq \Gamma_n \mu_n$: Since $u$ is continuous across $I_{mn}$ (cf. Appendix \ref{sec:transmission_cond}), $\H=\Gamma\mu u$ is discontinuous at $x$, 
	so we get $x \in J_0(\H) \subset J_2(\H)$.
	
	\item[(2)] $\Gamma_m \mu_m = \Gamma_n \mu_n, \frac{\mu_m}{D_m} \neq \frac{\mu_n}{D_n}$: From \eqref{eq:diff_eq_scalar} and $u \in C(\overline\Omega)$ we get
	\begin{equation*}
	\Delta u_m(x) = \frac{\mu_m}{D_m} u_m(x) \neq \frac{\mu_n}{D_n} u_n(x) = \Delta u_n(x).
	\end{equation*}
	
	Hence $x \in J_2(u)$, which implies $x \in J_2(\H)$ since $\Gamma \mu$ is constant in $\Omega_m \cup \Omega_n$.  

	\item[(3)] $\Gamma_m \mu_m = \Gamma_n \mu_n, D_n \neq D_m$: First, let  $x \in I_{mn}$ be a point where the transmission condition \eqref{eq:transmission_cond} and \eqref{eq:cd_assumption_1} hold (by assumption, this is the case for almost all points with respect to the surface measure). We have 
\begin{equation*}
\begin{aligned}
\left|\nabla u_m(x) - \nabla u_n(x) \right| &\geq \left| (\nabla u_m(x)- \nabla u_n(x))\cdot \nu(x) \right|  \\
&\geq \left|1-\frac{D_m}{D_n} \right| \left|\nabla u_m(x) \cdot \nu(x) \right| > 0.
\end{aligned}
\end{equation*}
This shows that $x \in J_1(u)$, which implies $x \in J_1(\H)$ and thus $x \in J_2(\H)$. By taking the closure, we get $x \in \overline{J_2(\H)}$ for all $x \in I_{mn}$.
\end{enumerate}
The cases (1)-(3) cover all possibilities, since otherwise all three parameters $\mu,D,\Gamma$ would be constant in $\Omega_m \cup \Omega_n$. 
\end{proof}

Proposition \ref{prop:jump_detection} shows that we can obtain the parameter discontinuities (in regions where \eqref{eq:cd_assumption_1} holds) via the set $J_2(\H)$. In fact, the proof tells us that $\H$, $\nabla \H$ or $\Delta \H$ have jumps at discontinuities of $\mu$, $D$ or $\Gamma$. That is, images of the gradient and Laplacian of the data $\H$ show material inhomogeneities not visible in $\H$.

In the next Proposition, we show how to recover piecewise constant parameters $\mu,D,\Gamma$ once their jump set $\bigcup_m \partial\Omega_m$ is known (e.g., from Proposition \ref{prop:jump_detection}). 
Knowledge of boundary values of $u$ alone is insufficient to fully determine the parameters (see Example \ref{ex:nonuniqueness}). We also have to require knowledge of the parameters in some 
$\Omega_n \subset \Omega, \ n \in \{1,\ldots,M\}$. Using the continuity of $u$, \eqref{eq:diff_eq_scalar} and \eqref{eq:transmission_cond}, we will show that these reference values combined with photoacoustic measurements $\H=\Gamma \mu u$ suffice to determine $\mu,D,\Gamma$ everywhere.

For this result, we again need an assumption on $\nabla u$. For every interface $I_{mn}=\partial\Omega_m \cap \partial\Omega_n$ with normal vector $\nu(x)$, we require the existence of some $x \in I_{mn}$ with
\begin{equation}
\label{eq:cd_assumption_2}
\nabla u_n(x) \cdot \nu(x) \neq 0 \quad (\stackrel{\eqref{eq:transmission_cond}}{\iff}  \nabla u_m(x) \cdot \nu(x) \neq 0),
\end{equation}
that is, on every interface $I_{mn}$ there must exist a point where $\nabla u$ is not tangential.

\begin{proposition}
\label{prop:uniqueness}
Let $\mu,D,\Gamma$ be of the form \eqref{eq:piecewise_const} (with the decomposition $(\Omega_m)$ of $\Omega$ known). Furthermore, let $u$ and $\H=\Gamma \mu u$ be corresponding fluence and initial pressure distribution which satisfy condition \eqref{eq:cd_assumption_2} on every interface $I_{mn} \subset \Omega$.  Furthermore, let $(\mu_n,D_n,\Gamma_n)$ be known for some $n$. Then the parameters $\mu,D,\Gamma$ can be determined uniquely from $\H$.
\end{proposition}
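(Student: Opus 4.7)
The plan is to reconstruct $(\mu_m, D_m, \Gamma_m)$ region by region by an induction on the adjacency graph of the partition $(\Omega_m)$, with the base case being the reference region $\Omega_n$ where all three parameters are given by hypothesis. In $\Omega_n$, since $\Gamma_n\mu_n$ is known, the fluence $u_n = \H_n/(\Gamma_n\mu_n)$ is computable in $\Omega_n$ and, by the continuous extension discussed after \eqref{eq:transmission_cond}, on (almost all of) $\partial\Omega_n$ as well.

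For the inductive step, assume $(\mu_p, D_p, \Gamma_p)$ and $u_p$ have already been reconstructed in some region $\Omega_p$, and let $\Omega_m$ share a non-degenerate interface $I_{pm}$ with $\Omega_p$. I would recover the parameters in $\Omega_m$ in four substeps. First, by the continuity \eqref{eq:u_continuity} of $u$ across $I_{pm}$, the boundary trace of $u_m$ on $I_{pm}$ equals that of $u_p$, which is already known, and since $u>0$ the constant $\Gamma_m\mu_m$ is pinned down by
\[\Gamma_m\mu_m = \frac{\H_m(x)}{u_m(x)} \quad \text{for any } x\in I_{pm}.\]
Second, $u_m = \H_m/(\Gamma_m\mu_m)$ is then known throughout $\Omega_m$. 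Third, the PDE \eqref{eq:diff_eq_scalar} gives the constant ratio
\[\frac{\mu_m}{D_m} = \frac{\Delta u_m(x)}{u_m(x)}\]
at any interior $x \in \Omega_m$ (where $u_m(x)>0$). Fourth, to separate $\mu_m$ and $D_m$, pick $x \in I_{pm}$ where both the transmission condition \eqref{eq:transmission_cond} and the non-tangentiality assumption \eqref{eq:cd_assumption_2} hold and set
\[D_m = D_p\,\frac{\nabla u_p(x)\cdot\nu(x)}{\nabla u_m(x)\cdot\nu(x)},\]
from which $\mu_m = D_m\cdot(\mu_m/D_m)$ and $\Gamma_m = (\Gamma_m\mu_m)/\mu_m$ follow.

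The main obstacle is ensuring that this induction actually reaches every region. One must show that the graph on $\{\Omega_1,\dots,\Omega_M\}$ whose edges are interfaces of positive surface measure is connected; this follows from connectedness of $\Omega$ together with mild regularity of the partition, but should be remarked on. A subtler compatibility issue is that \eqref{eq:cd_assumption_2} only guarantees \emph{one} point on each interface with non-tangential $\nabla u$, whereas the transmission condition \eqref{eq:transmission_cond} is only imposed almost everywhere on $I_{pm}$. The cleanest way around this is to observe that $u_p$ is real-analytic in $\Omega_p$ as a solution of a constant-coefficient elliptic equation, so $\nabla u_p\cdot\nu$ is analytic along any smooth piece of $I_{pm}$ and its non-vanishing at a single point forces non-vanishing on a subset of full surface measure of that piece; hence one can select an $x$ at which both the transmission condition and the non-tangentiality hold, making step four unambiguous.
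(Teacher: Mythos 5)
Your proposal is correct and follows essentially the same route as the paper's proof: continuity of $u$ across the interface yields $\Gamma_m\mu_m$, the transmission condition \eqref{eq:transmission_cond} at a point satisfying \eqref{eq:cd_assumption_2} yields $D_m$, the interior equation \eqref{eq:diff_eq_scalar} yields $\mu_m/D_m$, and iteration over interfaces propagates the values (the paper phrases the same three identities in terms of $\H$, $\nabla\H\cdot\nu$ and $\Delta\H$ via \eqref{eq:unique_1}--\eqref{eq:unique_3} rather than first recovering $u_m$, but this is only a reparametrization). Your additional remarks on connectedness of the adjacency graph and on reconciling the single-point condition \eqref{eq:cd_assumption_2} with the almost-everywhere transmission condition are sound refinements of points the paper leaves implicit.
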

\begin{proof}
Let $\Omega_m$ be a neighbouring subregion to $\Omega_n$ and denote by $I_{mn}=\partial\Omega_m \cap \partial\Omega_n$ the interface. By continuity of $u$ and \eqref{eq:def_H}, we have for all $y \in I_{mn}$

\begin{equation}
\label{eq:unique_1}
	\Gamma_m \mu_m = \frac{\H_m(y)}{\H_n(y)} \ \Gamma_n \mu_n 
\end{equation}

so from the reference values and $\H$ we can calculate $\Gamma\mu$ on neighbouring $\Omega_m$. 

Next, let $x \in \partial\Omega_m \cap \partial\Omega_n$ such that $\nabla u_n(x) \cdot \nu(x) \neq 0$. Using \eqref{eq:transmission_cond} and $\nabla \H_k = \Gamma_k \mu_k \nabla u_k$ for all $k$ (since the parameters are constant in $\Omega_k$) we get

\begin{equation}
\label{eq:unique_2}
	\frac{D_m}{\Gamma_m \mu_m} = \frac{(\nabla \H_n \cdot \nu)(x)}{(\nabla \H_m \cdot \nu)(x)} \frac{D_n}{\Gamma_n \mu_n}.
\end{equation}  

Finally we get for all in $z \in \Omega_m$, from \eqref{eq:diff_eq_scalar} and $\Delta H_m = \Gamma_m \mu_m \Delta u_m$ in $\Omega_m$,

\begin{equation}
\label{eq:unique_3}
\frac{\mu_m}{D_m}=\frac{\Delta \H_m(z)}{\H_m(z)}.
\end{equation}

The equations \eqref{eq:unique_1}-\eqref{eq:unique_3} suffice to obtain $\mu_m$, $D_m$ and $\Gamma_m$, since we have 
\begin{equation}
\label{eq:unique_final}
	(\mu,D,\Gamma)=\left( A B C , A B, \frac{1}{B C}   \right), 
\end{equation}
for $A=\Gamma\mu,\ B=\frac{D}{\Gamma\mu},\ C=\frac{\mu}{D}$.

By iterating over all interfaces, we can find $\mu,D,\Gamma$ everywhere in $\Omega$.
\end{proof}

Note that in Proposition \ref{prop:uniqueness}, no knowledge of boundary values of $u$ is required, values of the parameters $\mu_n,D_n,\Gamma_n$ in some $\Omega_n$ are enough. In fact, knowledge of two of the three parameters already suffices, as we will show in the following Proposition.

\begin{proposition}
\label{prop:local_uniqueness}
For a given $n$, the constants $(\mu_n,D_n,\Gamma_n)$ can be determined uniquely from photoacoustic data $\H_n=\Gamma_n \mu_n u_n$ and knowledge of one of the pairs $(\mu_n,\Gamma_n)$ or $(D_n,\Gamma_n)$. If $u(x)$ is known for some $x \in \Omega_n$, knowing one of the three constants is enough. If only one of the parameters $\mu_n,D_n,\Gamma_n$, only $u_n$, or the only pair $(\mu_n, D_n)$ is known, $(\mu_n,D_n,\Gamma_n)$ cannot be determined uniquely.
\end{proposition}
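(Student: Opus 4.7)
The plan is to leverage two identities already established in the proof of Proposition \ref{prop:uniqueness}. Restricted to $\Omega_n$ these read
\begin{equation*}
\frac{\mu_n}{D_n} \;=\; \frac{\Delta\H_n(x)}{\H_n(x)}, \qquad \Gamma_n\mu_n \;=\; \frac{\H_n(x)}{u_n(x)} \quad \text{for every } x \in \Omega_n,
\end{equation*}
the first depending on $\H_n$ alone and the second requiring in addition one value of the fluence. Conversely, given the datum $\H_n$, if one picks any positive triple $(\mu_n,D_n,\Gamma_n)$ satisfying the ratio constraint above, then $u_n := \H_n/(\Gamma_n\mu_n)$ automatically solves $D_n\Delta u_n = \mu_n u_n$ and reproduces $\H_n$. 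Hence the set of admissible parameter triples compatible with $\H_n$ forms a two-parameter family, and exactly two independent pieces of additional information are required to single out one triple.

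For the positive uniqueness statements I would then simply combine these identities with the supplied data. Knowledge of the pair $(\mu_n,\Gamma_n)$ fixes $D_n$ via the ratio $\mu_n/D_n$, and symmetrically for $(D_n,\Gamma_n)$. If one value $u(x_0)$ at some $x_0 \in \Omega_n$ is provided, the second identity yields the product $\Gamma_n\mu_n$; together with $\mu_n/D_n$ from the first identity, any single one of $\mu_n$, $D_n$ or $\Gamma_n$ then determines the remaining two sequentially.

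For the non-uniqueness statements I would exhibit explicit one-parameter families of admissible triples consistent with the prescribed partial information. If only $\mu_n$, only $D_n$, or the pair $(\mu_n,D_n)$ is given, then $\mu_n$ and $D_n$ are pinned down by the ratio constraint but $\Gamma_n$ is free; replacing $\Gamma_n$ by an arbitrary $\Gamma_n' > 0$ and $u_n$ by $\H_n/(\Gamma_n'\mu_n)$ produces a valid alternative triple. If only $\Gamma_n$ is given, the rescaling $(\mu_n,D_n,\Gamma_n) \mapsto (\lambda\mu_n,\lambda D_n,\Gamma_n)$, accompanied by replacing $u_n$ with $u_n/\lambda$, preserves both the ratio and $\Gamma_n$ itself. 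Finally, if only the function $u_n$ is prescribed, the scaling from Example \ref{ex:nonuniqueness}, namely $(\lambda\mu_n,\lambda D_n,\Gamma_n/\lambda)$, leaves both $u_n$ and $\H_n$ invariant. I do not expect a substantive obstacle: the proof reduces to a dimension count in the $(\mu_n,D_n,\Gamma_n)$-space together with the explicit scaling symmetries. The only care required is to verify in each non-uniqueness case that the alternative fluence remains positive and actually solves the corresponding local diffusion equation, which is immediate from the ratio constraint.
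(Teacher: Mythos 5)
Your proposal is correct and follows essentially the same route as the paper: the two identities you isolate, $\mu_n/D_n=\Delta\H_n/\H_n$ and $\Gamma_n\mu_n=\H_n/u_n$, are exactly the content of the paper's system \eqref{eq:diff_eq_system}, and your non-uniqueness families coincide with the scalings $(\lambda u_n,\tfrac1\lambda\Gamma_n)$ and $(\lambda\mu_n,\lambda D_n,\tfrac1\lambda\Gamma_n)$ used there. Your explicit verification of the converse --- that every positive triple satisfying the ratio constraint is admissible, so the compatible set is a genuine two-parameter family --- is a small but welcome sharpening of an implicit step in the paper's argument.
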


\begin{proof}
From \eqref{eq:diff_eq_scalar} and $\H=\Gamma\mu u$, we know that in $\Omega_n$

\begin{equation*}
 \begin{aligned}
	D_n \Delta u_n - \mu_n u_n &= 0 \\
	\Gamma_n \mu_n u_n &= \H_n,
 \end{aligned}
\end{equation*}

which is equivalent to

\begin{equation}
\label{eq:diff_eq_system}
 \begin{aligned}
	\Gamma_n D_n \Delta u_n &= \H_n \\
	\Gamma_n \mu_n u_n &= \H_n  \\
	\Gamma_n \mu_n \Delta u_n &= \Delta \H_n.
 \end{aligned}
\end{equation}

Here, one can immediately see that if $u(x)$ is known for some $x \in \Omega_n$, we can calculate $u_n(y)=\frac{1}{\Gamma_n \mu_n}\H_n(y) =\frac{u(x)}{\H(x)} \H_n(y)$ for all $y \in \Omega_n$ and thus also $\Delta u_n$ . Clearly, $(\mu_n,D_n,\Gamma_n)$ can now be determined from \eqref{eq:diff_eq_system} if one of the parameters is known.

Likewise, given one of the pairs $(\mu_n,\Gamma_n)$ or $(D_n,\Gamma_n)$ we can to calculate all three constants $(\mu_n,D_n,\Gamma_n)$. 

Knowledge of $(\mu_n,D_n)$, on the other hand, is insufficient because $\lambda u_n$, $\frac 1 \lambda \Gamma_n$ satisfy \eqref{eq:diff_eq_system} for given $(\mu_n,D_n)$ for all $\lambda > 0$. 
Similarly, the system is underdetermined if only $u_n$ or $\Gamma_n$ is known.

\end{proof}

Conditions \eqref{eq:cd_assumption_1} and \eqref{eq:cd_assumption_2} are vital for unique reconstruction. For instance, using Lemma \ref{prop:transmission_cond_converse} one can see that, $u(x,y,z)=e^x$ is a weak solution of \eqref{eq:diff_eq} in $\mathbb{R}^3$ for both
\begin{equation*}
\mu \equiv 1, \quad D \equiv 1, \quad \Gamma \equiv 1
\end{equation*}
and
\begin{equation*}
\tilde \mu = 
\left\{ 
	\begin{array}{ll}
		1  & \text{ if } y \geq 0 \\
	    \lambda & \text{ if } y < 0
	\end{array}
\right. \hspace{-0.6 em}, \quad
\tilde D = 
\left\{ 
	\begin{array}{ll}
		1  & \text{ if } y \geq 0 \\
		\lambda  & \text{ if } y < 0
	\end{array}
\right. \hspace{-0.6 em}, \quad
\tilde \Gamma = 
\left\{ 
	\begin{array}{ll}
		1  & \text{ if } y \geq 0 \\
		\frac 1 \lambda  & \text{ if } y < 0
	\end{array}
\right. \hspace{-0.6 em}, \quad \lambda > 0
\end{equation*}
since $u$ is a classical solution on both sides of the interface $\{y=0\}$ and it satisfies $\nabla u \cdot \nu=0$. Furthermore, both parameter sets generate the same data $\H(x,y,z)=e^x$.

More generally, parts of interfaces where condition \eqref{eq:cd_assumption_1} fails to hold don't necessarily lie in $J_2(\H)$ and may thus be invisible to our reconstruction procedure (depending on the geometry, this may also lead to follow-up errors). If condition \eqref{eq:cd_assumption_2} fails to hold, it might not be possible to determine $\mu,D,\Gamma$ everywhere. 

To overcome this problem, we can use additional measurements (with different illumination directions) and hope that the location of critical points and gradient directions change. In particular, if photoacoustic data $(\H^k)_{k=1}^K$ corresponding to solutions $(u^k)_{k=1}^K$ of \eqref{eq:diff_eq} that satisfy for almost all $x \in \Omega$ 
\begin{equation}
\label{eq:detcond}
		\max_{i,j,k} \left| \det(\nabla u^i(x),\nabla u^j(x),\nabla u^k(x)) \right| > 0 \\
\end{equation}
are available, on every $x \in I_{mn}$, one of the measurements satisfies \eqref{eq:cd_assumption_1} (since $\nabla u^i(x),\nabla u^j(x),\nabla u^k(x)$ form a basis). 
With a similar argument as in Proposition \ref{prop:jump_detection} one can show that in this case
\begin{equation*}
J_0(\mu) \cup J_0(D) \cup J_0(\Gamma) = \bigcup_{k=1}^K J_2(\H^k),
\end{equation*}
so unique reconstruction of $\mu,D,\Gamma$ in $\Omega$ can be guaranteed. To our knowledge, no method to force condition \eqref{eq:detcond} by boundary conditions or choice of source is known, however, its validity can be checked by looking at the data $(\H^k)_{k=1}^K$.

\section{Numerical reconstruction}
\label{sec:numerics}

In this section, we show how the results in the last section can be utilized numerically. Our goal is to estimate unknown piecewise constant parameters $\mu,D,\Gamma$ from noisy three-dimensional photoacoustic data $(\H^k)_{k=1}^K$ (with varying boundary excitations) sampled on a regular grid. 
 
We propose a two-step reconstruction:
\begin{itemize}

	\item[(1)] Detect jumps in $(\H^k)_{k=1}^K$, $(\nabla \H^k)_{k=1}^K$, $(\Delta \H^k)_{k=1}^K$ and use the obtained surfaces to segment the image domain $\Omega$ to estimate subregions $(\hat{\Omega}_m)_{m=1}^M$ where the parameters are constant (and thus $\H$ is smooth).
	\item[(2)] Given $(\hat\Omega_m)_{m=1}^M$ and reference values $(\mu_1,D_1,\Gamma_1)$, use the jump values of $(\H^k)_{k=1}^K$ and $(\nabla \H^k \cdot \nu)_{k=1}^K$  across the estimated interfaces and values of $(\frac{\Delta \H^k}{\H^k})_{k=1}^K$ to find $\mu,D,\Gamma$ everywhere in $\Omega$ (using equations \eqref{eq:unique_1}-\eqref{eq:unique_final}).

\end{itemize}

\subsection{Finding regions where the parameters are constant}
\label{sub:num_regions}

In the proof of Proposition \ref{prop:jump_detection}, one can see that in regions $\Omega$ where \eqref{eq:cd_assumption_1} holds, discontinuities of the (piecewise constant) parameters $\mu,D,\Gamma$ correspond to jumps of $\H$, $\nabla \H,\Delta \H$. We want to use \emph{computational edge detection} to find these jumps. 

We start by finding jumps in $\H$. Since they are multiplicative (i.e., $\frac{H_m}{H_n}$ is constant on $I_{mn}$), we apply a logarithm transformation to get constant jumps along the interfaces. In fact, let $I_{mn}=\partial\Omega_m \cap \partial\Omega_n$ be an interface of the parameters $\mu,D,\Gamma$. Since $u_m=u_n$ on $I_{mn}$, we have 

\begin{equation*}
\label{eq:jump_Gamma_mu}
	\left|\log(\H_n)-\log(\H_m) \right|=\left|\log(\Gamma_n \mu_n)-\log(\Gamma_m\mu_m) \right|  \text{ on } I_{mn},
\end{equation*}
so jumps in $\Gamma \mu$ lead to jumps of equal magnitude in $\log \H$. 

Next, we show that jumps in $D$ (that are large enough compared to those in $\Gamma\mu$) lead to jumps in $\log|\nabla \H|$. We restrict our search domain to $\Omega' \subset \Omega$ such that $|\nabla \H| \geq d > 0$ holds in $\Omega'$. Due to continuity of $u$ we have $\nabla u_m \cdot \tau = \nabla u_n \cdot \tau$ (for tangential vectors $\tau$) on parts of $I_{mn}$ that are $C^1$. Thus, we obtain on parts of $I_{mn} \cap \Omega'$ where \eqref{eq:transmission_cond} holds,
\begin{equation*}
\label{eq:jump_nabla_u}
\begin{aligned}
	\frac{|\nabla u_n|^2}{|\nabla u_m|^2} &= \frac{(\nabla u_n \cdot \nu)^2 + (\nabla u_n \cdot \tau)^2}{|\nabla u_m|^2} \stackrel{\eqref{eq:transmission_cond}}{=} \left(\frac{D_m}{D_n}\right)^2 \frac{ (\nabla u_m \cdot \nu)^2}{|\nabla u_m|^2} + \frac{ (\nabla u_m \cdot \tau)^2}{|\nabla u_m|^2} \\
		&= 1 + \left( \left(\frac{D_m}{D_n}\right)^2 - 1 \right) \cos(\alpha_m)^2,  \\
\end{aligned}
\end{equation*}
where $\alpha_m$ denotes the angle between the unit normal $\nu$ and $\nabla u_m$. Using $D_m \geq D_n$ (without loss of generality, otherwise we swap indices),  we get 
\begin{align*}
	\left| \log|\nabla u_n| - \log|\nabla u_m| \right| &\geq \frac 1 2 \log \left( 1 + \left(  e^{2 |\log D_m - \log D_n|} - 1 \right) \min_{\Omega,k} \cos(\alpha_k)^2 \right) \\
	&:= \gamma\left(| \log D_m - \log D_n| \right).
\end{align*}
If $\min_{\Omega,k} \cos(\alpha_k)^2 > 0$ holds in $\Omega'$, the function $\gamma$ is positive, strictly increasing and unbounded. Hence, using the reverse triangle inequality,
\begin{equation*}
\label{eq:jump_D}
\begin{aligned}
	|\log(|&\nabla \H_n|) - \log(|\nabla \H_m|) | = \left|\log(\Gamma_n\mu_n)-\log(\Gamma_m\mu_m)+\log|\nabla u_n|-\log|\nabla u_m| \right| \\
	&\geq \left|\log|\nabla u_n|-\log|\nabla u_m| \right| - \left|\log(\Gamma_n\mu_n)-\log(\Gamma_m\mu_m) \right| \\
	&\geq \gamma \left(| \log D_m - \log D_n| \right) - |\log(\Gamma_n \mu_n) - \log(\Gamma_m \mu_m)|.
\end{aligned}
\end{equation*}

Finally, since $\frac{\Delta \H_m}{\H_m}=\frac{\mu_m}{D_m}$ for all $m$, we get on $I_{mn}$
\begin{equation*}
\left|\log \left(\frac{\Delta \H_m}{\H_m} \right)-\log \left(\frac{\Delta \H_n}{\H_n} \right)\right| = \left|\log \left(\frac{\mu_m}{D_m} \right) - \log \left(\frac{\mu_n}{D_n} \right)\right|
\end{equation*}
which shows that jumps in $\log \left(\frac{\mu}{D}\right)$ lead to jumps of equal magnitude in $\log \left(\frac{\Delta \H}{\H}\right)$.

To ensure that $|\nabla \H| \geq d > 0$ holds, we enforce a minimum for $|\nabla \H|$ (to avoid creating singularities). We counter failure of $\min_{\Omega,k} \cos(\alpha_k)^2 > 0$ by using additional measurements.

To estimate $(\Omega_m)_{m=1}^M$ given noisy data $\H^k$, we first look for jumps in $\log(\H^k)$, then $\log |\nabla \H^k|$ and last in $|\log(\Delta \H^k) - \log(\H^k)|$. More precisely, we proceed as follows: 

\begin{itemize}

	\item[(1)] Find $\hat{J}_0 \subset \Omega$, a surface across which $\log\H^k$ jumps more than some threshold $\tau_0$. Segment the domain $\Omega$ using $\hat{J}_0$ (i.e., find the connected components of $\Omega \setminus \hat{J}_0$), giving subsets $(\hat{\Omega}^0_i)_{i=1}^I$, an estimate of the regions where $\Gamma\mu$ is constant.
	\item[(2)] In all $\hat{\Omega}^0_i$, search for jumps in $\log |\nabla \H^k|$ that are bigger than threshold $\tau_1$, obtaining sets $\hat{J}^i_1 \subset \hat{\Omega}^0_i$. Take $\hat{J}_1=\bigcup_i \hat{J}^i_1 \cup \hat{J}_0$ and segment $\Omega$ using $\hat{J}_1$ to get $(\hat{\Omega}^1_i)_{i=1}^I$, an estimate for the regions where $\Gamma\mu$ and $D$ are constant.
	\item[(3)] In all $\hat{\Omega}^1_i$, search for jump sets $\hat{J}^n_2 \subset \hat{\Omega}^1_n$ of $|\log \Delta \H^k - \log \H^k|$, with values above lower threshold $\tau_2$. We get $\hat{J}_2=\bigcup_n \hat{J}^i_2 \cup \hat{J}_1$, our estimate for $J(\Gamma\mu)$ Finally, by segmenting $\Omega$ using $\hat{J}_2$ we get $(\hat{\Omega}_m)_{m=1}^M$, an estimate for the regions where $\Gamma\mu,D,\frac \mu D$ (and thus also parameters $\mu,D,\Gamma$) are constant.

\end{itemize}

We can take advantage of multiple measurements $(\H^k)_{k=1}^K$ (with different illuminations) by detecting edges separately for all $\H^k$ (and their derivatives) and joining the edge sets prior to segmentation in each step (1)-(3), or simpler, by averaging the input data for edge detection in each steps (1)-(3) (we implemented the second strategy). Using multiple measurements can be vital to counter locally missing contrast due to failure of condition \eqref{eq:cd_assumption_1} or close to extremal points of $\H$. 

For the actual jump detection, we use \emph{Canny edge detection} in differential form as proposed by Lindeberg (cf. \cite{Can86} and \cite{Lin98}). See Appendix \ref{sec:canny} for a short description of the method.

\subsection{Estimating optical parameters}
\label{sub:est_parameters}

In the second stage of the reconstruction process, we want to estimate $\mu,D,\Gamma$ from photoacoustic data $(\H^k)_{k=1}^K$  (sampled on a regular grid) given an estimate of the sets $(\Omega_m)_{m=1}^M$ (from the previous section) and reference values, for which we choose $(\mu_1,D_1,\Gamma_1)$ (without loss of generality). For simplicity, we first explain the procedure for a single measurement $\H$.

In the proof of Proposition \ref{prop:uniqueness}, evaluations of $\H_m$,$\nabla \H_m \cdot \nu$ and $\Delta \H_m$ at isolated points were sufficient to obtain all parameters. In the presence of noise and discretization error it is, however, better to use all the jump information available. Rather than calculating $\mu_m,D_m,\Gamma_m$ in an arbitrary order using equations \eqref{eq:unique_1}-\eqref{eq:unique_final} we use a least-squares fitting method to calculate $\Gamma\mu,\frac{D}{\Gamma\mu},\frac{\mu}{D}$ in all $\Omega_m$ simultaneously. 

Since the data $\H$ contains noise and is only known on a grid, we can only calculate the values of $\H_m$ (whose values may not be known precisely on interfaces), $\nabla \H_m \cdot \nu$ and $\Delta \H_m$ up to some error. For $m=1,\ldots,M$, $y \in \partial\Omega_m$,  $z \in \Omega_m$ and $x \in \partial\Omega_m$ with $|\nabla\H_m(x) \cdot \nu(x)| > 0$ let $h_m, g_m, l_m$ be the approximations 
\begin{equation}
\label{eq:estpar_approx}
\begin{aligned}
h_m(y) &\approx \log \H_m(y) \\
g_m(x) &\approx \log |\nabla\H_m(x) \cdot \nu(x)| \\
l_m(z) &\approx \log \left( \frac{\Delta \H_m(z)}{\H_m(z)} \right). 
\end{aligned}
\end{equation}

From \eqref{eq:unique_1} and \eqref{eq:unique_2}, we get on $I_{mn}$ 
\begin{equation}
\label{eq:estpar_error_1}
  \log(\Gamma_m \mu_m)-\log(\Gamma_n\mu_n)=\log(\H_m)-\log(\H_n) = h_m - h_n + \epsilon_1 \\
\end{equation}
and 
\begin{equation}
\label{eq:estpar_error_2}
\begin{aligned}
  \log \left(\frac{D_m}{\Gamma_m \mu_m} \right) - \log \left(\frac{D_n}{\Gamma_n \mu_n} \right)  &=  \log|\nabla \H_n \cdot \nu| - \log|\nabla \H_m \cdot \nu|\\
  &=  g_n - g_m + \epsilon_2,
\end{aligned}
\end{equation}
with $\epsilon_1, \epsilon_2$ denoting error terms.  Now, we can estimate 
\begin{equation*}
	\hat a_m \approx \log(\Gamma_m \mu_m), \quad \hat b_m \approx \log \left(\frac{D_m}{\Gamma_m \mu_m} \right)
\end{equation*}
for $m>2$ ($a_1, b_1$ can be calculated from the reference values) by choosing values which minimize the $L^2$-norm of the error terms $\epsilon_1$ and $\epsilon_2$ over all interfaces, that is, by solving the least squares problems 
\begin{equation}
\label{eq:estpar_lsq}
\begin{aligned}
(\hat{a}_2,\ldots,\hat{a}_M) &= \arg\min_{a_2,\ldots,a_M} \sum_{\substack{n,k=1 \\ k>n}}^M \norm{a_k - a_n + h_n - h_k}^2_{L^2(I_{nk})} \\
(\hat{b}_2,\ldots,\hat{b}_M) &= \arg\min_{a_2,\ldots,a_M} \sum_{\substack{n,k=1 \\ k>n}}^M \norm{b_k - b_n - g_n + g_k }^2_{L^2(\tilde I_{nk})},
\end{aligned}
\end{equation}
In the second least squares problem, we restrict the calculation to $\tilde I_{nk}$, a subset of $I_{nk}$ where $g_n,g_k$ are below some bound (i.e., where $|\H_n \cdot \nu|$ and $|\H_k \cdot \nu|$ are not zero).

A simple calculation shows that the optimizers $\hat{a},\hat{b}$ satisfy for $2 \leq m \leq M$
\begin{equation}
\label{eq:estpar_loglin_sol}
\begin{aligned}
	\sum_{k \neq m} (\hat{a}_m - \hat{a}_k) \mathcal{A}(I_{mk}) &= \sum_{k \neq n} \int_{I_{mk}} h_m - h_k \,dS\\
    \sum_{k \neq m} (\hat{b}_m - \hat{b}_k) \mathcal{A}(\tilde I_{mk}) &= \sum_{k \neq n} \int_{\tilde  I_{mk}} g_k - g_m \,dS,
\end{aligned}	
\end{equation}
where $\mathcal{A}(I_{nk})$ denotes the area of the interface $\partial\Omega_m \cap \partial\Omega_n$. Since the corresponding system matrices are irreducibly diagonally dominant, the optimizers $\hat{a},\hat{b}$ are unique (see, e.g., \cite[Theorem 6.2.27]{HorJoh90} ). 

In \eqref{eq:estpar_loglin_sol}, one can see that in the special case where $\Omega_1$ is the background and $\Omega_2,\ldots,\Omega_M$ are inclusions with no shared boundaries, our approach is equivalent to adding to $\log(\Gamma_1 \mu_1)$ (respectively $\log (\frac{D_1}{\Gamma_1 \mu_1}$)) the estimated jump values $h_k - h_1$ (respectively $g_1-g_k$) averaged over $\partial\Omega_k$.

Now, using \eqref{eq:unique_2} and \eqref{eq:estpar_approx}, we have in $\Omega_m, \ m=1,\ldots,M$
\begin{equation}
\label{eq:estpar_error_3}
  \log \left( \frac{\mu_m}{D_m} \right) = \log \left( \frac{\Delta \H_m}{\H_m} \right) = l_m + \epsilon_3
\end{equation}
for some error term $\epsilon$. As before, we can estimate
\begin{equation*}
\hat{c}=\log \left( \frac{\mu}{D} \right )
\end{equation*}
by minimizing the error term $\epsilon_3$. We obtain for $m=1,\ldots,M$
\begin{equation}
\label{eq:estpar_quad_sol}
	\hat{c}_m = \arg\min_c \norm{c - l_m }_{L^2(\Omega_m)}^2 = \frac{1}{\mathcal{V}(\Omega_m)}\int_{\Omega_m} l_m \,dx,
\end{equation}
where $\mathcal{V}(\Omega_m)$ denotes the volume of $\Omega_m$, i.e., we take the mean of $l_m$ in $\Omega_m$. Finally, from $\hat a, \hat b, \hat c$, we can calculate $\hat\mu,\hat D,\hat\Gamma$ with \eqref{eq:unique_final}.

The use of multiple measurements simply amounts to an additional summation in \eqref{eq:estpar_loglin_sol} and \eqref{eq:estpar_quad_sol}, which corresponds to minimizing over the sum of all measurements. 

\subsection{Implementation}
\label{sub:implementation}

We implemented the ideas presented in the last sections in \emph{MATLAB}. The, possibly noisy, photoacoustic pressure data $(\H^k)_{k=1}^K$ is given sampled on a regular 3D-grid with sufficiently high resolution.

Following the scheme presented in \ref{sub:num_regions}, we first estimate subregions $(\hat{\Omega}_m)_{m=1}^M$ where $\mu,D,\Gamma$ are constant by using computational edge detection and then segmenting $\Omega$ using the obtained jump sets.

To detect jumps we use differential \emph{Canny edge detection} (see Appendix \ref{sec:canny} for details). The derivatives are estimated via finite differences (after low-pass filtering with a Gaussian kernel). We obtain jump surfaces with sub-voxel resolution in the form of a triangular mesh. For segmentation, we applied the \emph{MATLAB} image processing toolbox function \emph{bwconncomp}, which works on a voxel level (small holes in the jump sets, for instance at corners, can be closed up by increasing the thickness of the voxelized surfaces). 

Given the jump surfaces and estimated regions $(\hat{\Omega}_m)_{m=1}^M$, in order to approximate $h_m \approx \log\H_m$ and $g_m \approx |\nabla \H_m \cdot \nu|$ (cf. \eqref{eq:estpar_approx}), we fit for every triangular element $e$ (with incenter $y$) of the surface a log-linear function $f_m^e$ to the data $\H_m$ at nearby grid points (using a Gaussian weight function that gives  grid points closer to $y$ a larger weight). By taking $h_m^e = \log f_m^e(y)$ and $g_m^e = \log |\nabla f_m^e(y) \cdot \nu(y)|$ at $y$, we get approximations $h_m$ and $g_m$ that are piecewise constant on the surface elements $e$. We obtain $\hat{a} \approx \log(\Gamma \mu)$ and $\hat{b} \approx \log \left(\frac{D}{\Gamma \mu} \right)$ by solving \eqref{eq:estpar_loglin_sol}.

Similarly, we use \eqref{eq:estpar_quad_sol} to estimate $\hat{c} \approx \log \left( \frac{\mu}{D} \right)$. Here, we locally (at grid points $z$ inside the estimated regions $\hat\Omega_m$) fit quadratic functions $q_m^z$ to the data $H_m$, calculate $l_m = \log \left|  \frac{\Delta q_m^z}{\H_m(z)} \right| \approx \log \left( \frac{\Delta \H_m}{\H_m} \right)$ and average over $z$ to obtain $\hat{c}$ (since the fitting procedure is computationally intensive, this calculation is only performed on a random sample of the grid points, replacing the total average with the sample average).

\section{Numerical examples}
\label{sec:examples}

In this section, we apply the numerical method described in the last section to simulated data. We start with a simple example using FEM-generated data with no added noise. 

In the second example, we work with Monte Carlo generated data with added noise. The Monte Carlo method for photon transfer in random media (which is physically more accurate than the diffusion approximation) converges to solutions of the \emph{radiative transfer equation} and thus satisfies our model \eqref{eq:diff_eq} only approximately (see, e.g., \cite{WanWu07} for details).

\subsection{Example using FEM-generated data}
\label{sub:fem_data}

In the first example, we simulated a single photoacoustic measurement (using one illumination pattern only) directly in the diffusion approximation, with no added noise.

We placed, centered at $z=5$, four spherical inhomogeneities (cf. Figure \ref{fig:example_fem_setup}) into a cubical grid $(x,y,z) \in [0,20] \times [0,10] \times [0,10]$ with resolution $320\times160\times160$. The fluence $u$ is calculated by numerically solving the PDE \eqref{eq:diff_eq} with homogeneous Dirichlet boundary conditions (simulating a uniform illumination). For this purpose, we take a self-written \emph{MATLAB} finite element solver (that splits the grid into a tetrahedral mesh and then uses linear basis elements). To get simulated initial pressure data $\H$, we re-sampled $u$ at the grid centerpoints and built $\H=\Gamma \mu u$ by multiplication with $\Gamma\mu$ (see Figure \ref{fig:example_fem_data}).
\FloatBarrier
\begin{figure}[htp]
	\centering
		\subfloat[Inhomogenities]{ \includegraphics[width=0.44\textwidth]{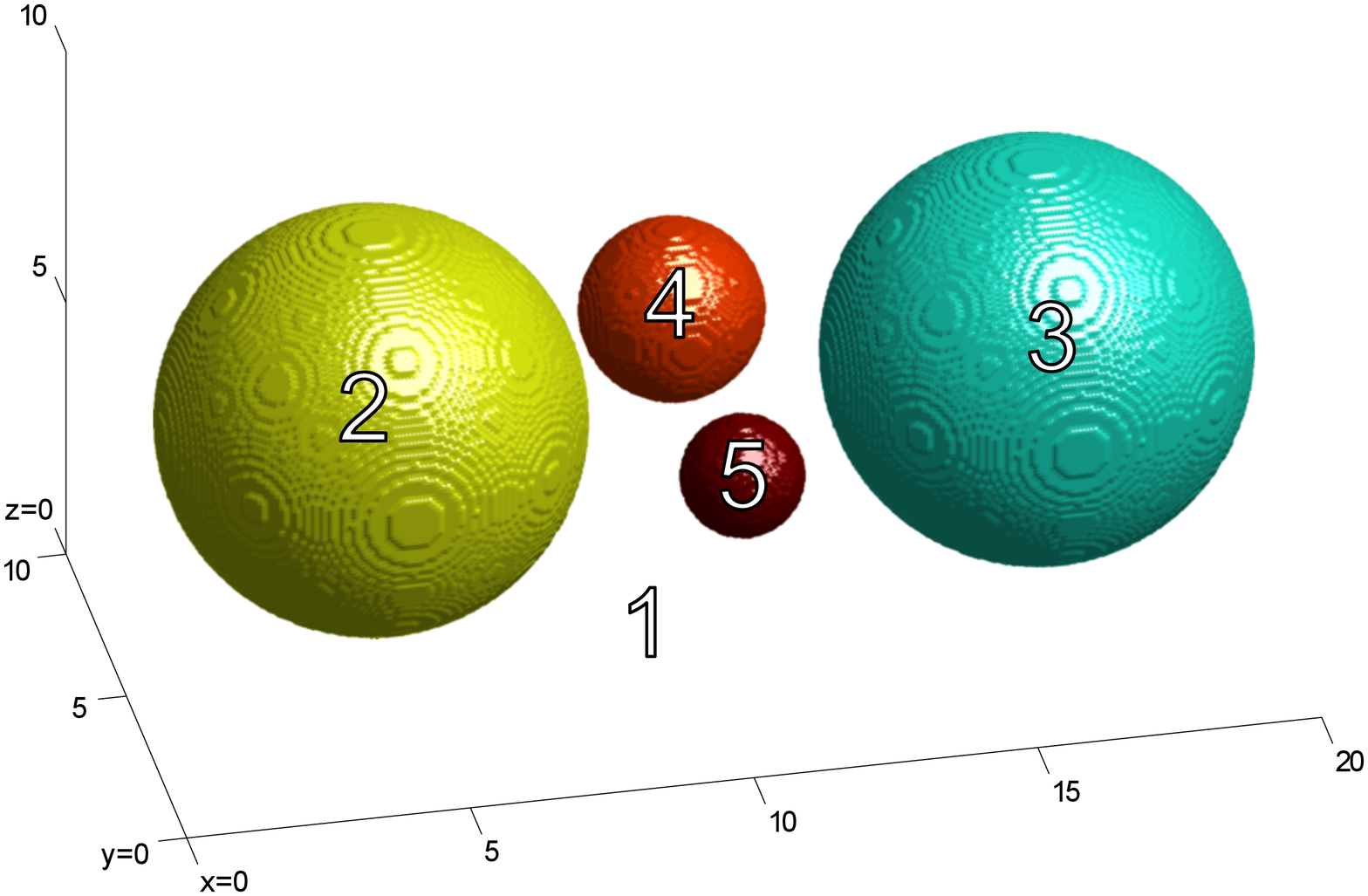}} \hspace{0.05\textwidth}
  		\subfloat[Material properties]
  		{ 
  		  \begin{tabular}[b]{cccc}	 		  	  
              & $\boldsymbol{\mu}$ & $\boldsymbol{D}$ & $\boldsymbol{\Gamma}$ \\
			  \toprule
		   	  \textbf{Region 1} & 0.1  & 1    & 1 \\
		   	  \midrule
		   	  \textbf{Region 2}   & 0.2  & 1    & 1 \\
		   	  \midrule		   	  
		   	  \textbf{Region 3}   & 0.1  & 0.25 & 1 \\
		   	  \midrule		   	  		   	  
		   	  \textbf{Region 4}   & 0.01 & 1    & 10 \\
		   	  \midrule		   	  
		   	  \textbf{Region 5}   & 1    & 10   & 0.01 \\		   	  
		   	  \bottomrule 
  	      \end{tabular} 	  
		}
	\caption[Simulation setup]
	{ Simulation setup. Spherical inhomogeneities viewed from top left (a) and their material properties (b).
	}	
	\label{fig:example_fem_setup}	
\end{figure}
\FloatBarrier
\begin{figure}[htp]
	\centering
		\subfloat[Fluence $u$]{ \includegraphics[width=0.44\textwidth]{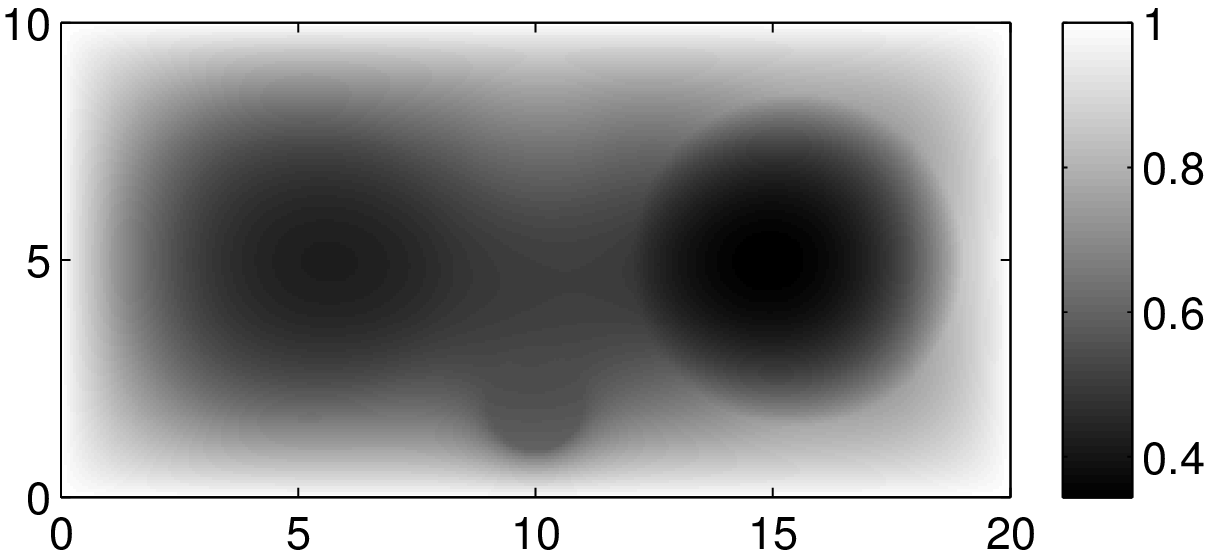}} \hspace{0.05\textwidth}
  		\subfloat[Initial pressure $\H$]{ \includegraphics[width=0.44\textwidth]{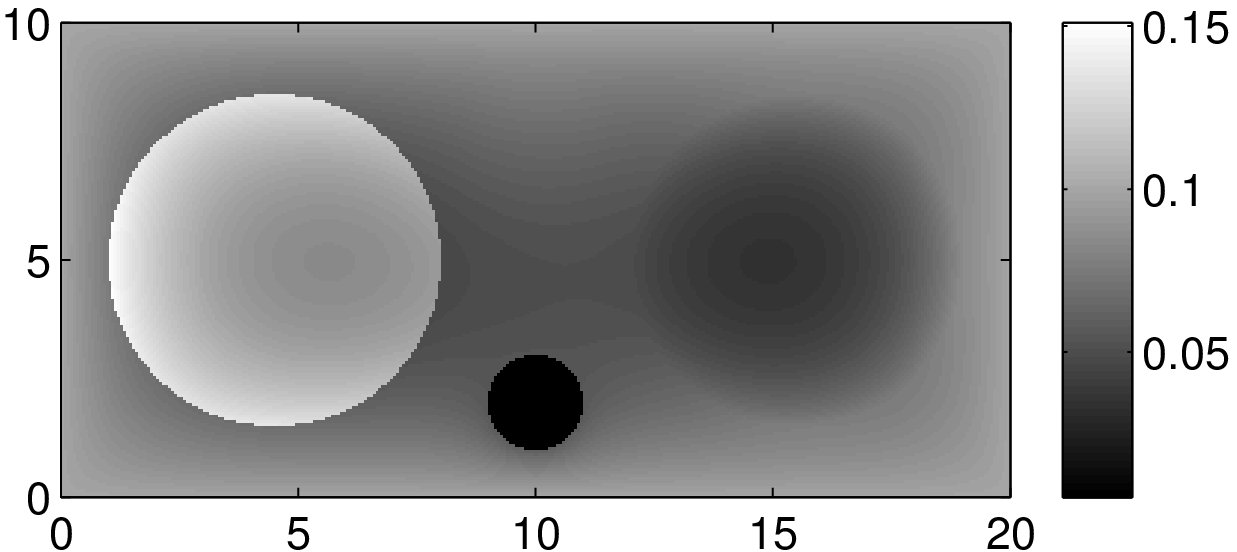}} \\
		\subfloat[$\log_{10} |\nabla \H|$]{ \includegraphics[width=0.44\textwidth]{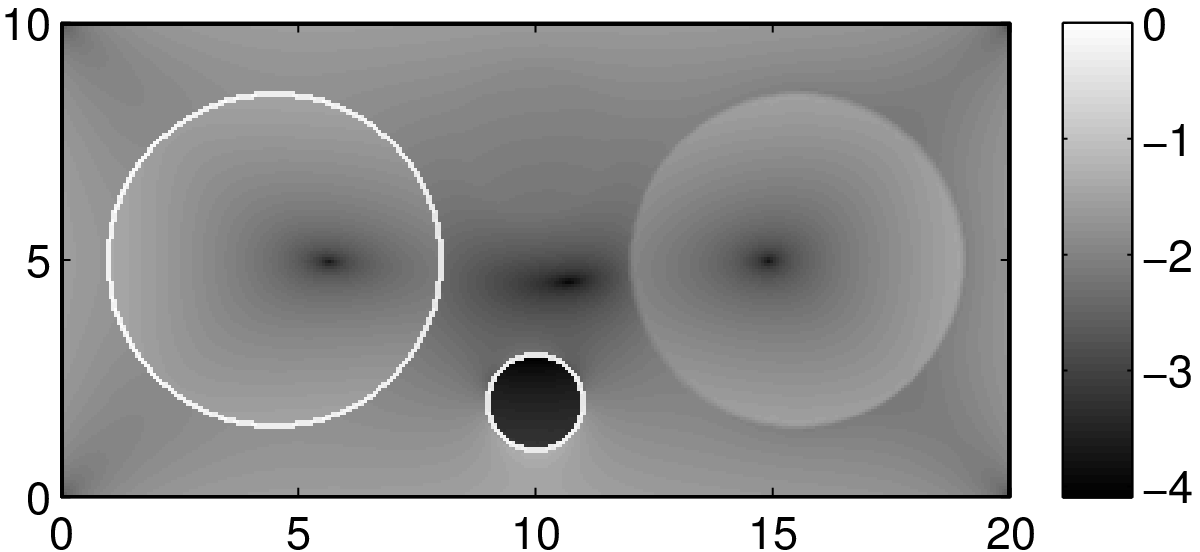}} \hspace{0.05\textwidth}
  		\subfloat[$\log_{10} |\Delta \H|$]{ \includegraphics[width=0.44\textwidth]{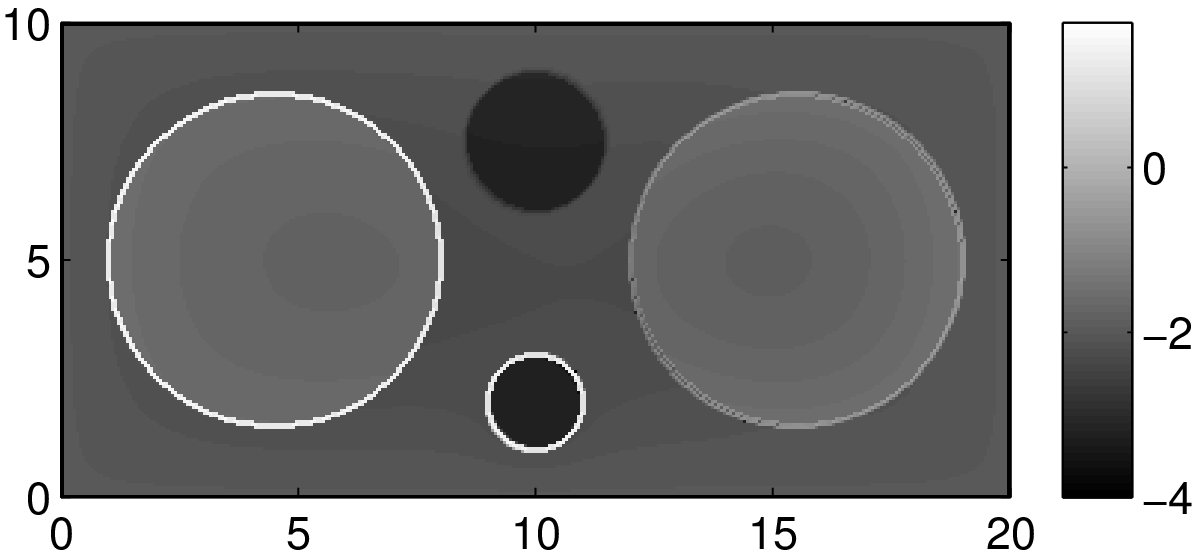}} \\
	\caption[Simulation results]
	{ FEM-simulated fluence, photoacoustic initial pressure and derivatives. The fluence is chosen to be uniform at the boundary. Derivatives are calculated by finite differences. All images are plane cuts at $z=5$.
	}	
	\label{fig:example_fem_data}	
\end{figure}

In Figure \ref{fig:example_fem_data}, one can see how the inhomogeneities affect the data $\H$ (cf. Proposition \ref{prop:jump_detection}). Spheres 1 and 4 have contrast in $\Gamma \mu$ with respect to the background, so their boundaries are are visible in $\H$. Sphere 2 displays contrast in $D$, but not in $\Gamma\mu$, its interface with the background hence can be seen in $|\nabla \H|$. Since in this particular example, the field $\nabla u$ is never parallel to the sphere's boundary, the whole boundary is visible. Sphere 3 has the same $\Gamma \mu$ and $D$ as the background, so it's only visible in $|\Delta \H|$.

\begin{figure}[htp]
	\centering
  		\subfloat[Estimated segmentation and jumps]{ \includegraphics[width=0.44\textwidth]{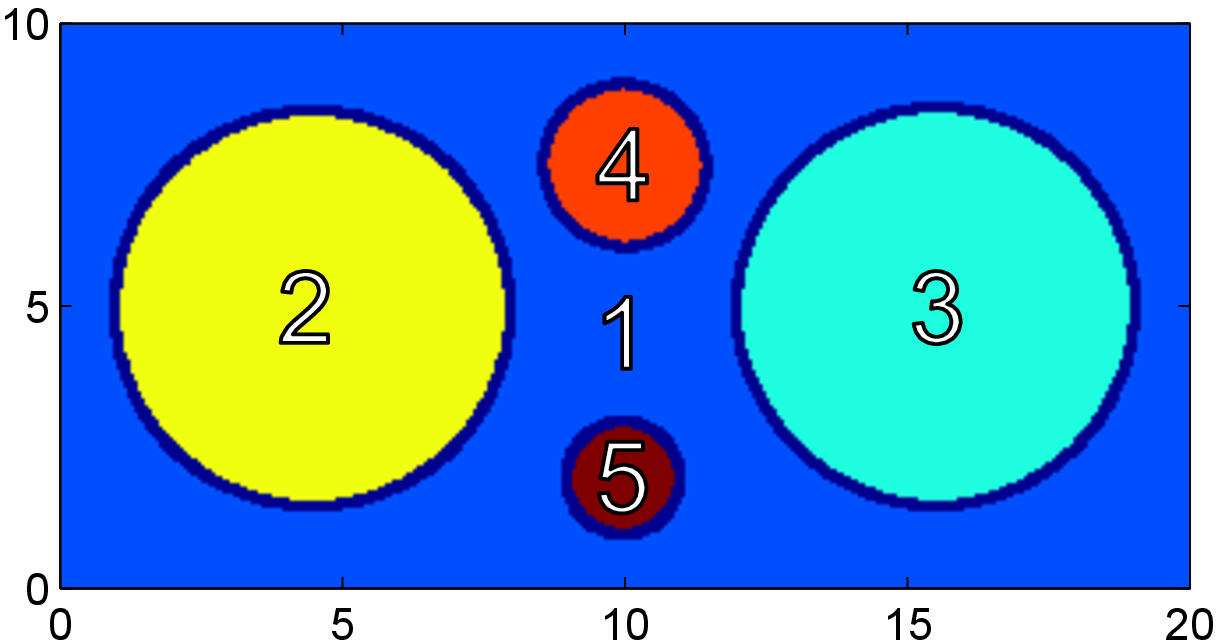}} \\
		\subfloat[Parameter estimates and relative errors]
  		{ 
  		  \begin{tabular}[b]{clll}	 		  	  
              					 & \multicolumn{1}{c}{$\boldsymbol{\hat{\mu}}$} 
              					 & \multicolumn{1}{c}{$\boldsymbol{\hat{D}}$} 
              					 & \multicolumn{1}{c}{$\boldsymbol{\hat{\Gamma}}$} \\
			  \toprule
		   	  \textbf{Region 1}  &  0.0995 (0.5\%)  &  1.0000 (0\%)    &   1.0000 (0\%)\\
		   	  \midrule		   	    
		   	  \textbf{Region 2}  &  0.1977 (1.1\%)  &  0.9880 (1.2\%)  &   1.0043 (0.4\%)\\
		   	  \midrule		   	  		   	  
		   	  \textbf{Region 3}  &  0.1105 (10.5\%) &  0.2759 (10.4\%) &   0.9072 (9.3\%)\\
		   	  \midrule		   	  
		   	  \textbf{Region 4}  &  0.0097 (2.8\%)  &  0.9723 (2.8\%)  &  10.2429 (2.4\%)\\
		   	  \midrule		   	  
		   	  \textbf{Region 5}  &  0.6238 (37.6\%) &  6.2361 (37.6\%) &   0.0158 (58.4\%)\\
		   	  \bottomrule 
  	      \end{tabular} 	  
		}  		
	\caption[Reconstruction results]
	{ Estimated jumps and regions in a plane cut at $z=5$ (a), estimated parameters and their relative errors (b).
	}	
	\label{fig:example_fem_results}	
\end{figure}

Figure \ref{fig:example_fem_results} shows the reconstruction results. As reference values, we used the values of $D$ and $\Gamma$ in the background (Region 1). All parameter discontinuities were recovered. Without noise, by far the biggest accuracy bottleneck is the estimation of jumps in $D$ from the normal components of $\nabla \H$, in particular for smaller structures (with respect to the resolution). The estimation of $\mu \Gamma$ and $\frac{\mu}{D}$ works almost perfectly for this type of data.
\FloatBarrier

\subsection{Example using Monte-Carlo-generated data}
\label{sub:monte_carlo_data}

For the second numerical example, we used \emph{MMC}, an open source 3D Monte-Carlo photon transfer simulator by Qianqian Fang (see \cite{Fan10} for details), to simulate photoacoustic measurements.

\FloatBarrier
We again placed four inhomogeneities, centered at $z=5$, into a homogeneous background cubic grid $(x,y,z) \in [0,10] \times [0,10] \times [0,10]$ with resolution $150\times150\times150$ (cf. Figure \ref{fig:example_mmc_setup}). Note that two of the structures touch (Regions 2 and 3). We deliberately chose the material parameters such that there is always enough contrast in $\Gamma\mu$ and $D$ so that edge detection in $\frac{\Delta \H}{\H}$ is not necessary (this proved to be very tricky in the presence of noise since it uses second order differences). Using \emph{MMC}, we calculated fluences $u^k, \,k=1,\ldots,6$ for $6$, for multiple sources (placed in the center of each of the cube's faces). We again re-sampled $u$ at the grid centerpoints, built initial pressure data $\H^k=\Gamma \mu u^k$ (by multiplication with $\Gamma\mu$) and added $5\%$ multiplicative Gaussian noise (which corresponds to a constant signal-to-noise ratio of about $26\,\mathrm{dB}$). 
\begin{figure}[htp]
	\centering
		\subfloat[Inhomogenities]{ \includegraphics[width=0.3\textwidth]{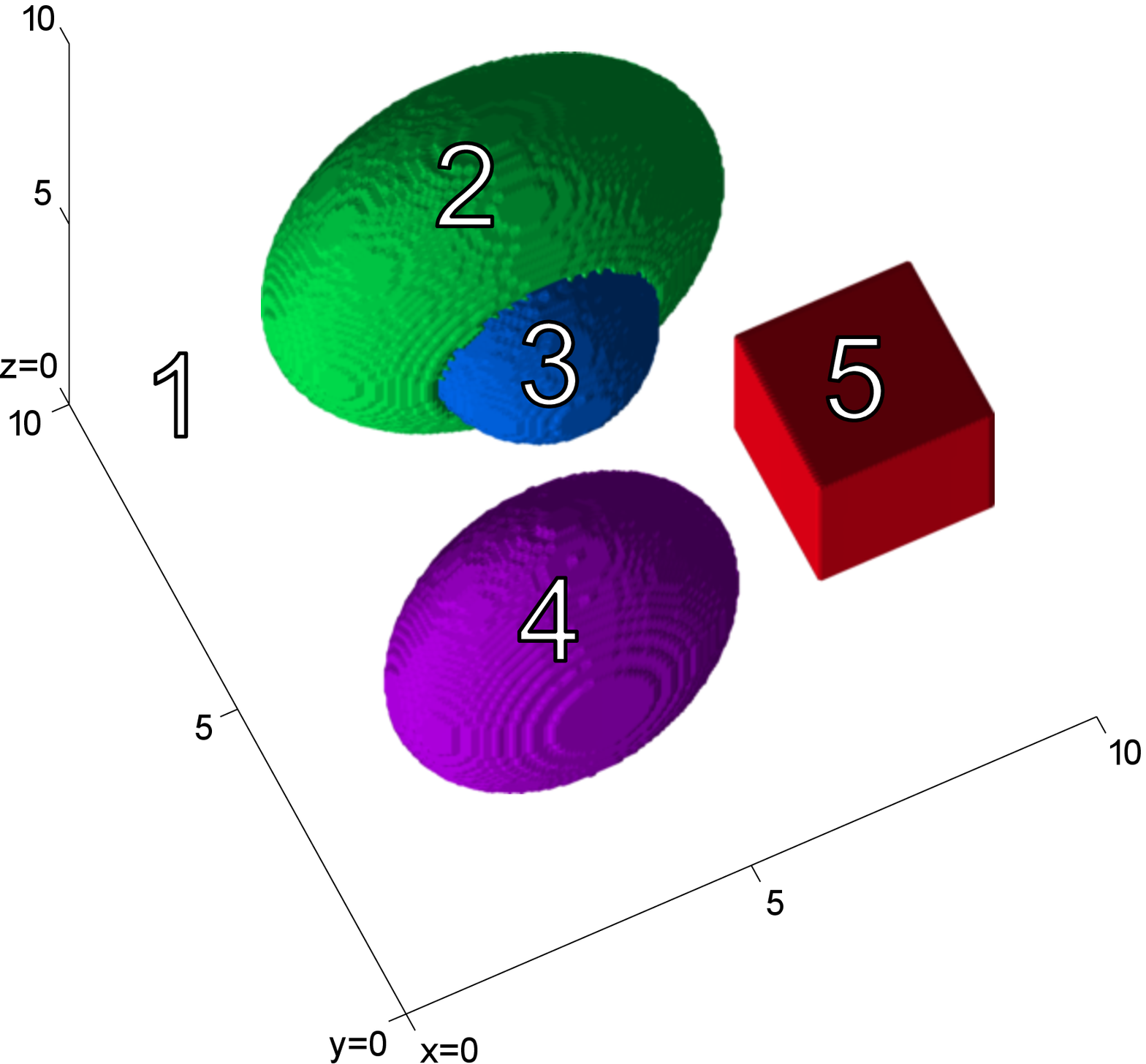}} \hspace{0.1\textwidth}
  		\subfloat[Material properties]
  		{ 
  		  \begin{tabular}[b]{cccc}	 		  	  
              & $\boldsymbol{\mu}$ & $\boldsymbol{D}$ & $\boldsymbol{\Gamma}$ \\
			  \toprule
		   	  \textbf{Region 1}   & 0.01  	& 0.166    & 1 \\
		   	  \midrule
		   	  \textbf{Region 2}   & 0.01  	& 0.056    & 1 \\
		   	  \midrule		   	  
		   	  \textbf{Region 3}   & 0.01  	& 0.166    & 1.2 \\
		   	  \midrule		   	  		   	  
		   	  \textbf{Region 4}   & 0.02 	& 0.538    & 0.5 \\
		   	  \midrule		   	  
		   	  \textbf{Region 5}   & 0.006   & 0.111    & 0.8 \\		   	  
		   	  \bottomrule 		   	  
  	      \end{tabular} 	  
		}
	\caption[Simulation setup]
	{ Simulation setup. Inhomogeneities viewed from the top right (a) and their material properties (b).
	}	
	\label{fig:example_mmc_setup}	
\end{figure}
\FloatBarrier
\begin{figure}[htp]
	\centering
		\subfloat[$\log_{10} u^1$]{ \includegraphics[width=0.25\textwidth]{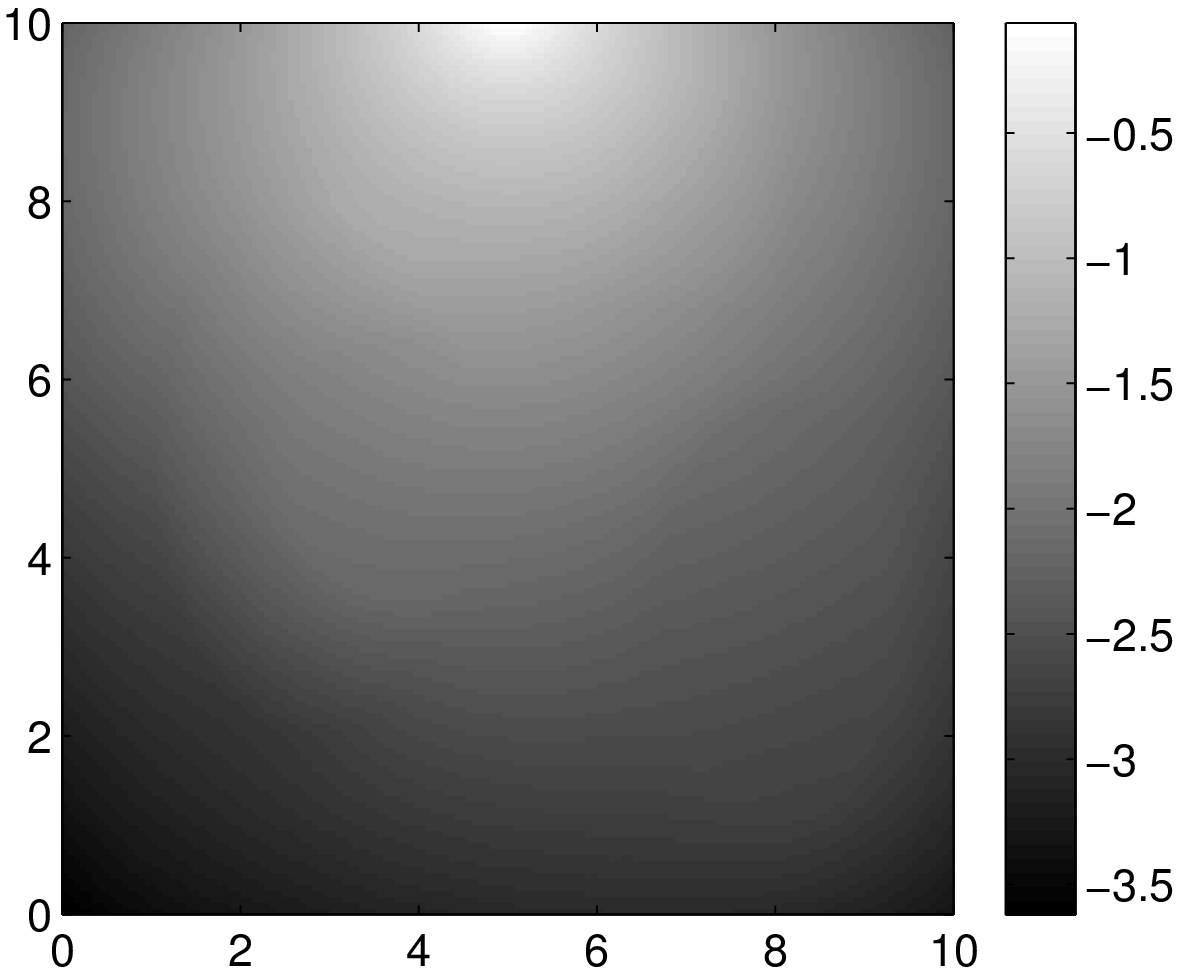}} \hspace{0.05\textwidth}
  		\subfloat[$\log_{10} \H^1$]{ \includegraphics[width=0.25\textwidth]{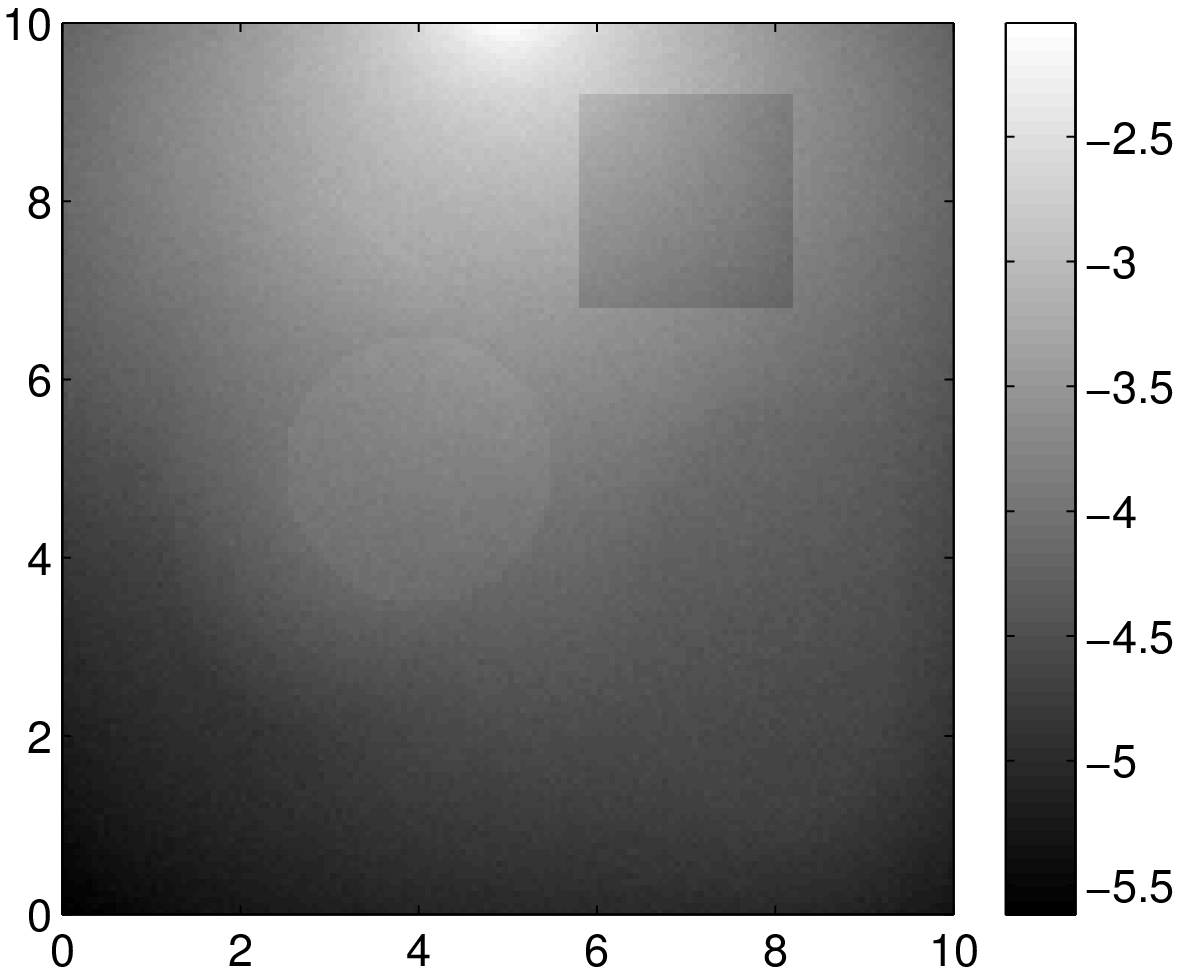}} \\
		\subfloat[$\log_{10} |\nabla \H^1| $]{ \includegraphics[width=0.25\textwidth]{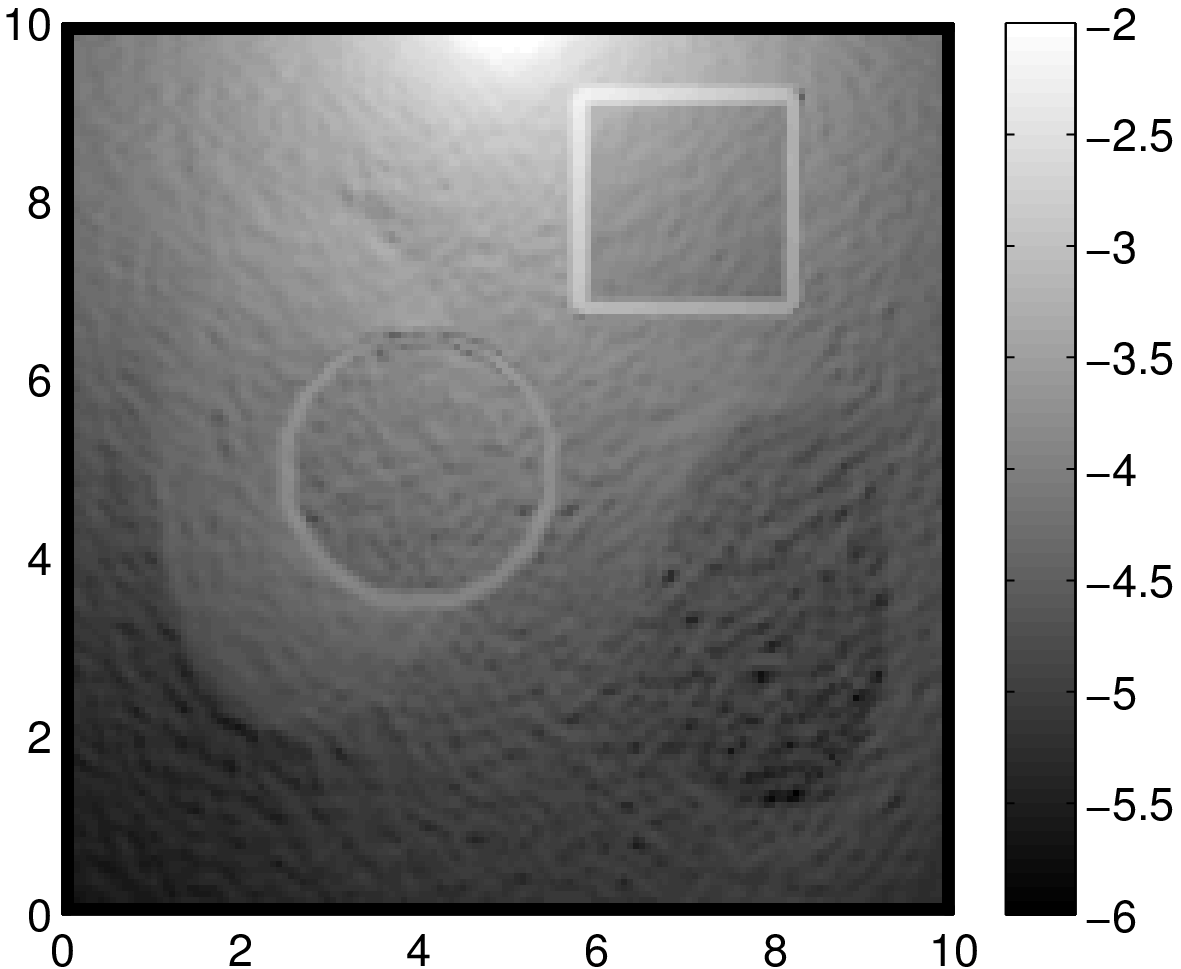}} \hspace{0.05\textwidth}
  		\subfloat[$\frac 1 6 \sum_k \log_{10} |\nabla \H^k| $]{ \includegraphics[width=0.25\textwidth]{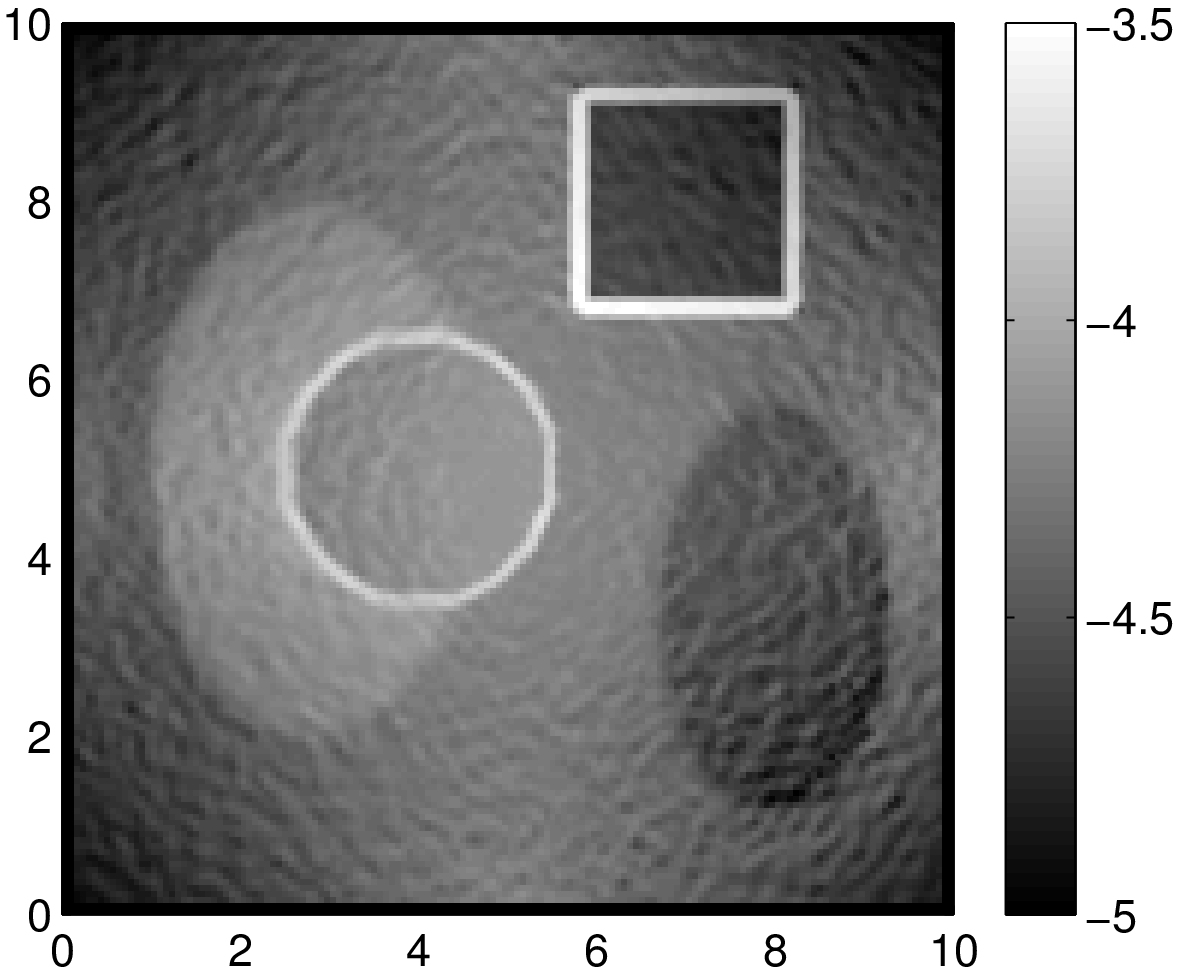}} \\
	\caption[Simulation results]
	{ MMC-simulated fluence, photoacoustic initial pressure (with noise) and its derivatives. The light source which generated $u_1$ is placed at the top of the image. Derivatives are calculated by convolution with a Gaussian (with standard deviation of $1$ pixel) followed by finite differences. All images are plane cuts at $z=5$.
	}	
	\label{fig:example_mmc_data}	
\end{figure}

Figure \ref{fig:example_mmc_data} shows a Monte-Carlo-simulated fluence $u^1$ and initial pressure $H^1$ (for which the light source at the top of the $z=5$ plane cut). Regions 3 and 5 are are visible in $\log\H^1$ due to contrast in  $\Gamma\mu$. Regions 2 and 4 appear in $\log|\nabla \H^1|$. At some parts of the regions boundaries, the $\nabla u$ is parallel to the boundary, which leads to vanishing contrast. Taking the mean of $\log |\nabla \H^k|$ (over the $6$ sources), the whole boundary is becomes visible. 
\FloatBarrier
\begin{figure}[htp]
	\centering
  		  \begin{tabular}[b]{clll}	 		  	  
              					 & \multicolumn{1}{c}{$\boldsymbol{\hat{\mu}}$} 
              					 & \multicolumn{1}{c}{$\boldsymbol{\hat{D}}$} 
              					 & \multicolumn{1}{c}{$\boldsymbol{\hat{\Gamma}}$} \\
			  \toprule
		   	  \textbf{Region 1}  &  0.012 (19.8\%)  &  0.166 (0\%)     &   1.0000 (0\%)\\
		   	  \midrule		   	    
		   	  \textbf{Region 2}  &  0.014 (41.2\%)  &  0.077 (39.2\%)  &   0.839 (16.1\%)\\
		   	  \midrule		   	  		   	  
		   	  \textbf{Region 3}  &  0.011 (11.5\%)  &  0.190 (14.7\%)  &   1.284 (7\%)\\
		   	  \midrule		   	  
		   	  \textbf{Region 4}  &  0.023 (16.2\%)  &  0.447 (16.9\%)  &   0.511 (2.1\%)\\
		   	  \midrule		   	  
		   	  \textbf{Region 5}  &  0.007 (19.2\%)  &  0.128 (15.5\%)  &   0.806 (0.8\%)\\
		   	  \bottomrule 
  	      \end{tabular} 	  
	\caption[Reconstruction results]
	{ Parameter estimates and relative errors.
	}	
	\label{fig:example_mmc_results}	
\end{figure}
\FloatBarrier
Figure \ref{fig:example_mmc_results} shows the reconstruction results. As reference values, we again used the values of $D$ and $\Gamma$ in the background. All parameter discontinuities were recovered. As before, errors in the estimation of jumps in $D$ from the normal components of $\nabla \H$ were the most significant. 
\FloatBarrier

\section{Conclusion}
\label{sec:conclusion}

Our theoretical analysis shows that in many cases (e.g., if enough measurements such that \eqref{eq:detcond} holds in the region of interest are available), unique reconstruction of piecewise constant $\mu,D,\Gamma$ from photoacoustic measurements at a single wavelength is possible. Our numerical implementation of the analytical reconstruction procedure works with reasonable accuracy, even with Monte Carlo generated data (which satisfies the diffusion approximation, which we use for reconstruction, only approximately). Our numerical method, however, requires data with very high resolution and large parameter contrast. In addition, due to the fact that we use second derivatives of the data, our  method is very sensitive to noise, so use with real data might turn out to be challenging. 

\section{Acknowledgements}
\label{sec:acknowledgements}

This work has been supported by the Austrian Science Fund (FWF) within the national research network Photoacoustic Imaging in Biology and Medicine (project S10505-N20) and by the IK I059-N funded by the University of Vienna. 

\appendix

\section{Derivation of transmission formulation}
\label{sec:transmission_cond}

In this section (following the proof in \cite{AttButMic06}), we prove that under some regularity assumptions, a function $u$ is a weak solution of 
\begin{equation}
\label{eq:dv_diff_eq}
	-\div(D \nabla u) + \mu u = 0 \quad \text{in $\Omega$}
\end{equation}
with piecewise smooth parameters $\mu,D$ if and only if 
\begin{itemize}
	\item[(1)] $u$ a classical solution in regions where the parameters are smooth,
	\item[(2)] $u$ is continuous,
	\item[(3)] the transmission condition \eqref{eq:dv_transmission_cond} holds at the jumps.
\end{itemize}

Let $\Omega \subset \mathbb{R}^n$ and $(\Omega_m)_{m=1}^M$ be piecewise-$C^1$ domains such that $\overline\Omega= \bigcup_{m=1}^M \overline\Omega_m$.

Denote by $T$ the part of the subregion boundaries that is $C^1$ and in the closure of at most two subregions. We require that the partition is chosen such $\mathcal{H}^{n-1}(\bigcup_{m=1}^M \partial\Omega_m \setminus T)=0$, i.e., the set junctions where more than three subregions meet or the boundary is not $C^1$ has zero surface measure.

Furthermore, let the parameters $\mu, D > 0$ be bounded and piecewise smooth, i.e., of the form 
\begin{equation*}
\label{eq:dv_parameters}
	\enskip \mu = \sum_{m=1}^M \mu_m 1_{\Omega_m}, \enskip D = \sum_{m=1}^M D_m 1_{\Omega_m} 
\end{equation*}
with $\mu_m, D_m \in C^{\infty}(\overline\Omega_m)$.  For a corresponding solution $u$ of \eqref{eq:dv_diff_eq}, let 
\begin{equation*}
	u_m:= u|_{\Omega_m},  \, m=1,\ldots,M.
\end{equation*}

\begin{lemma}
\label{prop:transmission_cond}
\hfill \\
Let $u$ be a weak solution of \eqref{eq:dv_diff_eq}. Furthermore, let $u_m, \, m=1,\ldots,M$ satisfy
\begin{equation}
\label{eq:dv_regularity_additional}
	u_m \in C^1(\overline\Omega_m \cap B) \quad \text{for all $B$ with $B \cap T = \emptyset$.}
\end{equation} 
Then $u \in C^\alpha(\Omega)$ for some $\alpha > 0$ and $u_m \in C^\infty(\Omega_m)$. Additionally, the restrictions $u_m$ satisfy
\begin{equation}
\label{eq:dv_strong_eq}
	    -\div( D_m \nabla u_m )+ \mu_m u_m = 0  \quad \text{in $\Omega_m$ } 
\end{equation}
and, almost everywhere on interfaces $I_{mn}=\partial\Omega_m \cap \partial\Omega_n$,
\begin{equation}
\label{eq:dv_transmission_cond}
		D_m \nabla u_m \cdot \nu = D_n \nabla u_n \cdot \nu \quad  \text{(for any interface normal $\nu$)}.
\end{equation}

\end{lemma}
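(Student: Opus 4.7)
The plan is to establish the three assertions in sequence, since each builds on the previous: Hölder continuity of $u$ on $\Omega$, smoothness and the classical PDE inside each $\Omega_m$, and finally the transmission condition on interfaces.

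For the Hölder continuity of $u$, I would appeal to De Giorgi--Nash--Moser theory. Because $D,\mu$ are bounded and piecewise smooth with $D>0$ on the compact set $\overline\Omega$, $D$ is bounded away from zero, so $-\div(D\nabla \cdot)+\mu$ is uniformly elliptic with bounded measurable coefficients. Standard interior regularity for weak solutions of such equations then yields $u\in C^\alpha_{\mathrm{loc}}(\Omega)$ for some $\alpha>0$.

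For the interior smoothness and strong equation, restrict the weak formulation to test functions $\phi\in C_c^\infty(\Omega_m)$. Since the coefficients $D_m,\mu_m$ are smooth on $\Omega_m$, $u_m$ is a weak solution of an elliptic equation with smooth coefficients on $\Omega_m$, so a standard bootstrap argument gives $u_m\in C^\infty(\Omega_m)$. Integration by parts (valid since everything is now smooth inside $\Omega_m$) yields the strong equation
\begin{equation*}
-\div(D_m\nabla u_m)+\mu_m u_m=0 \quad \text{in } \Omega_m.
\end{equation*}

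The main step is the transmission condition. Pick any point on the ``good'' part $T$ of the interface set, and a ball $B$ around it that misses the junction set $\bigcup_m\partial\Omega_m\setminus T$ and meets at most two subregions $\Omega_m$ and $\Omega_n$. For $\phi\in C_c^\infty(B)$, split the weak formulation into two pieces. Using the additional hypothesis $u_m\in C^1(\overline\Omega_m\cap B)$ (and similarly for $u_n$), I would integrate by parts on each piece; the volume terms vanish by the strong equation just established, leaving only boundary contributions on $\partial\Omega_m\cap B$ and $\partial\Omega_n\cap B$. Since $\phi$ vanishes outside $B$ and the outward normals satisfy $\nu_m=-\nu_n$ on $I_{mn}\cap B$, one obtains
\begin{equation*}
\int_{I_{mn}\cap B}(D_m\nabla u_m-D_n\nabla u_n)\cdot\nu\, \phi\, dS=0
\end{equation*}
for every such $\phi$. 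The fundamental lemma applied on the $(n{-}1)$-dimensional surface $I_{mn}\cap B$ then gives $D_m\nabla u_m\cdot\nu=D_n\nabla u_n\cdot\nu$ pointwise on $I_{mn}\cap B$. Covering $T$ by such balls propagates the identity to all of $T$.

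The main obstacle is the bad set $\bigcup_m\partial\Omega_m\setminus T$, consisting of corners, edges, and points where three or more subregions meet, where conormal derivatives need not be well defined and the local two-region integration by parts breaks down. The hypothesis $\mathcal{H}^{n-1}(\bigcup_m\partial\Omega_m\setminus T)=0$ is precisely what lets us discard this set and conclude that the transmission condition holds \emph{almost everywhere} with respect to surface measure on the interfaces, which is the strongest statement we can expect.
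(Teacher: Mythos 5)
Your proposal is correct and follows essentially the same route as the paper: De Giorgi--Nash--Moser for the H\"older continuity, interior elliptic regularity restricted to each $\Omega_m$ for smoothness and the strong equation, and integration by parts against test functions supported in a small ball meeting only two subregions, with the $C^1(\overline\Omega_m\cap B)$ hypothesis justifying the boundary traces and the fundamental lemma on $I_{mn}\cap B$ yielding the transmission condition, which extends to almost every interface point because the junction set has zero surface measure.
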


\begin{proof}
A weak solution $u$ of \eqref{eq:dv_diff_eq} satisfies $u \in H^1(\Omega)$ and
\begin{equation*}
\label{eq:dv_weak_solution_subset}
	\int_\Omega D \nabla u \cdot \nabla \phi + \mu u \phi \,dx = 0	\text{ for all } \phi \in C^\infty_c(\Omega).
\end{equation*}

Since the equation is elliptic, we have $u_m \in C^\infty(\Omega_m)$ by interior regularity \cite[Corollary 8.11]{GilTru01} and hence, from integration by parts, $\int_\Omega -\div(D_m \nabla u_m) \phi + \mu_m u_m \phi \,dx= 0$ for all $\phi \in C^\infty_c(\Omega_m)$, which shows that \eqref{eq:dv_strong_eq} holds classically in $\Omega_m$, $m=1,\ldots,M$.

From De Giorgi-Nash-Moser theorem \cite[Theorem 8.22]{GilTru01} we get $u \in C^\alpha(\overline\Omega)$ for some $\alpha > 0$. 

Next, let $I_{mn}=\partial\Omega_m \cap \partial\Omega_n$ be the interface between some $\Omega_m$ and $\Omega_n$. For almost all $x \in I_{mn}$ (those in $T$), there exists an open ball $B \Subset \Omega$ such that $x \in B = B_m \cup B_n = (\Omega_m \cap B) \cup (\Omega_n \cap B)$ (by the restriction on the partition).

Using integration by parts and \eqref{eq:dv_strong_eq} we get for all $\phi \in C^\infty_c(B) \subset C^\infty_c(\Omega)$
\begin{equation*}
\begin{aligned}
	0 &= \int_\Omega D \nabla u \cdot \nabla \phi + \mu u \phi \,dx \\
	&= \int_{\Omega_m} D_m \nabla u_m \cdot \nabla \phi + \mu_m u_m \phi \,dx + \int_{\Omega_n} D_n \nabla u_n \cdot \nabla \phi + \mu_n u_n \phi \,dx \\ 
	&= \int_{\partial B_m} (D_m \nabla u_m \cdot \nu) \phi + \int_{\partial B_n} (D_n \nabla u_n \cdot \nu) \phi \,dS \\
	&= \int_{I_{mn} \cap B} (D_m \nabla u_m \cdot \nu - D_n \nabla u_n \cdot \nu) \phi \,dS.
\end{aligned}
\end{equation*}
The transmission condition \eqref{eq:dv_transmission_cond} follows since $\nabla u_m, \nabla u_n \in C(I_{mn} \cap B)$ (by assumption \eqref{eq:dv_regularity_additional}). 

\end{proof}

For certain partition geometries, weak solutions of \eqref{eq:dv_diff_eq} always satisfy condition \eqref{eq:dv_regularity_additional}. For instance, Li and Nirenberg \cite[Proposition 1.4]{LiNir03} showed that if $(\Omega_m)_{m=2}^M$ are inclusions with smooth boundaries (which may also touch in some points) and background $\Omega_1$, one gets $u_m \in C^\infty(\overline\Omega_m)$. 

For sufficiently regular $u_m$ (e.g., in the setting just described) we can also derive the converse of Lemma \ref{prop:transmission_cond}: 
\begin{lemma}
\label{prop:transmission_cond_converse} Let $u_m:= u|_{\Omega_m} \in C^2(\overline\Omega_m), \,m=1,\ldots,M,$ satisfy
\begin{equation}
\label{eq:dv_strong_eq_converse}
	    -\div( D_m \nabla u_m )+ \mu_m u_m = 0 \quad \text{in $\Omega_m$ } \\   
\end{equation}
and, on interfaces $I_{mn}=\partial\Omega_m \cap \partial\Omega_n$ with normal $\nu$,
\begin{equation}
\label{eq:dv_transmission_cond_converse}
\begin{aligned}
		u_m &= u_n \\
		D_m \nabla u_m \cdot \nu &= D_n \nabla u_n \cdot \nu.
\end{aligned}		
\end{equation}
Then $u$ is a weak solution of \eqref{eq:dv_diff_eq}.
\end{lemma}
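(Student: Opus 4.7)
The plan is to verify the two defining properties of a weak solution directly: first that $u \in H^1(\Omega)$, with distributional gradient equal to the piecewise classical one $v := \sum_m 1_{\Omega_m} \nabla u_m$, and then that the variational identity $\int_\Omega D \nabla u \cdot \nabla \phi + \mu u \phi \,dx = 0$ holds for every $\phi \in C^\infty_c(\Omega)$. In both steps the strategy is the same: split the integral across subregions, apply the divergence theorem on each piecewise-$C^1$ piece, and observe that the resulting interior-interface boundary terms cancel in pairs thanks to the two transmission conditions.

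For the $H^1$-claim I would fix $\phi \in C^\infty_c(\Omega)$ and a coordinate direction $i$, then decompose $\int_\Omega u \,\partial_i \phi \,dx = \sum_m \int_{\Omega_m} u_m \,\partial_i \phi \,dx$ and integrate by parts on each $\Omega_m$. The bulk terms collect to $-\int_\Omega v_i \phi \,dx$, while each interior interface $I_{mn}$ contributes $\int_{I_{mn}} (u_m - u_n)\phi \,\nu_i \,dS$, which vanishes by the continuity transmission condition $u_m = u_n$. The part of $\partial\Omega_m$ lying on $\partial\Omega$ is killed by $\supp \phi \Subset \Omega$. Since each $u_m \in C^2(\overline\Omega_m)$ makes $u$ and $v$ bounded on $\Omega$, both lie in $L^2(\Omega)$ and we obtain $u \in H^1(\Omega)$ with $\nabla u = v$.

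For the weak equation I would run essentially the same calculation on $\sum_m \int_{\Omega_m} D_m \nabla u_m \cdot \nabla \phi + \mu_m u_m \phi \,dx$, integrating by parts on each $\Omega_m$. The classical strong equation \eqref{eq:dv_strong_eq_converse} wipes out the volume contributions and leaves only $\sum_m \int_{\partial\Omega_m} (D_m \nabla u_m \cdot \nu_m) \phi \,dS$. On every interior interface $I_{mn}$ the two adjacent flux integrals are traversed with opposite outward normals, so the second transmission condition $D_m \nabla u_m \cdot \nu = D_n \nabla u_n \cdot \nu$ forces pointwise cancellation; the outer-boundary piece again drops out because $\phi$ is compactly supported in $\Omega$. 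What remains is zero, which is precisely the weak formulation.

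The main obstacle is the boundary-integral bookkeeping at the ``junction'' set of points that lie in three or more $\overline\Omega_m$ or where some $\partial\Omega_m$ fails to be $C^1$. I would handle this by invoking the standing assumption on the partition, namely that this exceptional set has zero $(n{-}1)$-dimensional surface measure: the divergence theorem then still applies on each piecewise-$C^1$ $\Omega_m$, and the pairing of interface integrals is unaffected by a surface-null set. With that technicality dispatched, the argument is essentially a clean cancellation driven entirely by the two transmission conditions, and no further analytic machinery is needed.
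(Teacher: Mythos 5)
Your proposal is correct and follows essentially the same route as the paper: establish $\nabla u = \sum_m 1_{\Omega_m}\nabla u_m$ as the weak gradient via per-subdomain integration by parts with the continuity condition cancelling interface terms, then verify the variational identity using the strong equation in each $\Omega_m$ and the flux transmission condition to cancel the remaining interface integrals. Your explicit remark that the junction set is handled by the standing surface-measure-zero assumption on the partition is exactly the technicality the paper relies on implicitly.
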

\begin{proof}
To get $u \in H^1(\Omega)$, we first show that the weak gradient of $u$ is given by 
\begin{equation}
\label{eq:dv_weak_gradient}
	\nabla u = \sum_{m=1}^M \nabla u_m 1_{\Omega_m}.
\end{equation}

To see that, note that for all $\phi \in C^\infty_c(\Omega)$
\begin{equation*}
 -\int_\Omega u \nabla \phi \,dx = -\sum_{m=1}^M \int_{\Omega_m} u_m \nabla \phi \,dx = \sum_{m=1}^M \left(\int_{\Omega_m} \nabla u_m \phi \,dx - \int_{\partial\Omega_m} u_m \phi\nu \,dS \right).
\end{equation*}

The interior boundary terms cancel out due to $u_m = u_n$, the exterior boundary terms vanish since $\supp \phi \Subset \Omega$), so the weak gradient of $u$ is given by \eqref{eq:dv_weak_gradient}. 
Hence $u \in H^1(\Omega)$ since 
\begin{equation*}
\begin{aligned}
	\norm{u}_{H^1(\Omega)}^2 &= \int_\Omega |u|^2 + |\nabla u|^2 \,dx = \sum_{m=1}^M \left( \int_{\Omega_m} |u_m|^2 + \left|\nabla u_m \right|^2 \,dx \right) \\
	&= \sum_{m=1}^M \norm{u_m}_{H^1(\Omega_m)}^2 \leq C \sum_{m=1}^M \norm{u_m}_{W^{1,\infty}(\overline\Omega_m)}^2 < \infty
\end{aligned}
\end{equation*}

Furthermore, using integration by parts, \eqref{eq:dv_strong_eq_converse} and \eqref{eq:dv_transmission_cond_converse} imply
\begin{equation*}
\begin{aligned}
	\int_{\Omega} D \nabla u \cdot  \nabla \phi + \mu u \phi \,dx  &= \sum\limits_{m=1}^M \int_{\Omega_m} D_m \nabla u_m \cdot \nabla \phi + \mu_m u_m \phi \,dx \\ 
	&= \sum_{m=1}^M \int_{\partial\Omega_m} D_m (\nabla u_m \cdot \nu) \phi \,dS = 0
\end{aligned}
\end{equation*}

for $\phi \in C^\infty_c(\Omega)$ since the boundary terms cancel out due to \eqref{eq:dv_transmission_cond_converse} and $\supp \phi \Subset \Omega$, so $u$ is a weak solution of \eqref{eq:dv_diff_eq}.
\end{proof}

\section{Differential Canny edge detection}
\label{sec:canny}

In differential \emph{Canny edge detection} as proposed by Lindeberg (cf. \cite{Lin98}), one starts from a \emph{scale space} representation $f_\sigma = f * g_\sigma$ of a two-dimensional image $f\colon \mathbb{R}^2 \to \mathbb{R}$, where $g_\sigma$ is a Gaussian kernel with standard deviation $\sigma$. Edges at scale $\sigma$ are then \emph{defined} (with finite resolution, no natural notion of discontinuity exists) as local maxima of the gradient magnitude $|\nabla f_\sigma|$ in gradient direction $\nabla f_\sigma$. Additionally, it is proposed to additionally maximize a certain functional measuring edge strength in scale space (which allows for automatic scale selection).

We want to use a similar algorithm to find the discontinuities of a three-dimensional function $f\colon \mathbb{R}^3 \to \mathbb{R}$ (which will be $\log \H^k$, $\log |\nabla \H^k|$ or $\log|\frac{\Delta \H^k}{\H^k}|$). Jumps of $f$ that are sufficiently big compared its continuous variation (within a grid step) lead to sudden changes of intensity (above some threshold) in the corresponding finite-resolution image. Heuristically, we have a similar situation as in Canny edge detection. That is, jump surfaces approximately correspond to thresholded maxima of $|\nabla f_\sigma|$ in gradient direction, where $f_\sigma = f * g_\sigma$ is the scale-space representation of $f$ for a properly chosen scale $\sigma$ (for simplicity, we will work at a single, manually chosen scale in this paper).

To estimate the jump set, we thus have to solve for \emph{fixed} $\sigma$ and $v=\nabla f_\sigma$ 
\begin{equation}
\label{eq:canny_eqn}
	\begin{aligned}
		\partial_v |\nabla f_\sigma|^2 &= \sum_{i,j=1}^3 v_i v_j\,\partial_{x_i x_j} f_\sigma = 0\\
		\partial_{vv} |\nabla f_\sigma|^2 &= \sum_{i,j,k=1}^3 v_i v_j v_k\,\partial_{x_i x_j x_k} f_\sigma > 0.
	\end{aligned}
\end{equation}

For discrete (voxelized) $f$, the solution manifold can be calculated with sub-voxel resolution. To restrict $E$, the solution surface of \eqref{eq:canny_eqn}, to parts where the gradient magnitude (and thus also the jump across the surface) is large enough, we perform \emph{hysteresis thresholding}. That is, we first apply a lower threshold $\rho_1$ to the jump strength $|\nabla f_\sigma |$ to get 
\begin{equation*}
	E_1 = \{ x \in E \ \big| \ |\nabla f_\sigma(x) | \geq \rho_1 \}.
\end{equation*}

Then, we remove all connected components $C \subset E_1$ for which the jump strength is never above a higher threshold $\rho_2$, so we get our final jump set $E_2$ with
\begin{equation*}
	E_2 = \bigcup \{ C \subset E_1 \ \big| \ C \text{ is connected} \ \wedge \ \exists x \in C\colon \ |\nabla f_\sigma(x) | \geq \rho_2 \}.
\end{equation*}

As a final step, we remove all isolated structures smaller than a certain size (which are usually due to misdetections and too small for further processing).

\def\cprime{$'$}
  \providecommand{\noopsort}[1]{}\def\ocirc#1{\ifmmode\setbox0=\hbox{$#1$}\dimen0=\ht0
  \advance\dimen0 by1pt\rlap{\hbox to\wd0{\hss\raise\dimen0
  \hbox{\hskip.2em$\scriptscriptstyle\circ$}\hss}}#1\else {\accent"17 #1}\fi}
  \def\cprime{$'$}

\end{document}